\documentclass[12pt]{amsart}
\usepackage[colorlinks=true,linkcolor=magenta,citecolor=cyan,urlcolor=olive]{hyperref}
\usepackage{cleveref}
\usepackage{amssymb,float,graphicx,mathrsfs,paralist,tikz,upgreek,xcolor}
\usepackage[top=2.5cm,bottom=2.5cm,left=3cm,right=3cm]{geometry}
\usetikzlibrary{arrows.meta}

\newcommand{\rad}{\mathrm{rad}}
\newcommand{\fm}{\mathfrak m}
\newcommand{\bH}{\mathbb H}
\renewcommand{\d}{\mathrm d}
\newcommand{\bR}{\mathbb R}
\newcommand{\e}{\mathrm e}
\renewcommand{\Re}{\operatorname{Re}}

\newcommand{\0}{\mathbf 0}
\newcommand{\1}{\mathbf 1}
\newcommand{\bg}{\mathbf g}
\newcommand{\arcosh}{\operatorname{arch}}
\newcommand{\bS}{\mathbb S}
\newcommand{\fe}{\mathfrak e}
\renewcommand{\cosh}{\operatorname{ch}}
\renewcommand{\sinh}{\operatorname{sh}}
\newcommand{\Avg}{\operatorname{Avg}}
\newcommand{\fM}{\mathfrak M}
\newcommand{\bC}{\mathbb C}
\newcommand{\bF}{\mathbf F}
\newcommand{\supp}{\operatorname{supp}}
\newcommand{\bI}{\mathbf I}
\newcommand{\bJ}{\mathbf J}

\newtheorem{theorem}{Theorem}
\newtheorem{lemma}[theorem]{Lemma}
\newtheorem{proposition}[theorem]{Proposition}

\numberwithin{equation}{section}
\numberwithin{theorem}{section}

\allowdisplaybreaks

\title[Endpoint estimates for hyperbolic spherical maximal operators]{Endpoint estimates for spherical maximal operators acting on radial functions in hyperbolic spaces}
\author{Yunxiang Wang}
\address{Yunxiang Wang, Department of Mathematics, Sun Yat-Sen University, Guangzhou, 510275, P.R. China}
\email{\href{mailto:wangyx677@mail.sysu.edu.cn}{wangyx677@mail.sysu.edu.cn}}
\subjclass[2020]{42B25, 43A85, 22E30}
\keywords{Spherical maximal operators, radial functions, hyperbolic spaces}

\begin{document}

    \begin{abstract}
        In this article we investigate spherical maximal operators $\mathfrak m^\alpha$ of real order $\alpha$ acting on radial functions in the hyperbolic space $\mathbb H^n$. We show that for every $n\geq 2$, $1<p\leq\infty$ and $\alpha>(1-n)/2$, $\mathfrak m^\alpha$ is bounded on $L^p_{\mathrm{rad}}(\mathbb H^n)$ if and only if
        \begin{align*}
            \begin{cases}
                \alpha>1-n+\frac np, & 1<p\leq 2,\\
                \alpha>\frac1p-\frac{n-1}2, & 2<p\leq\frac{2n}{n-1},\\
                \alpha\geq\frac{1-n}p, & p>\frac{2n}{n-1}.
            \end{cases}
        \end{align*}
        Moreover, we show that for $1<p\leq 2n/(n-1)$ and
        \begin{align*}
            \alpha=
            \begin{cases}
                1-n+\frac np, & 1<p\leq 2,\\
                \frac1p-\frac{n-1}2, & 2<p\leq\frac{2n}{n-1},
            \end{cases}
        \end{align*}
        $\mathfrak m^\alpha$ is bounded from $L^{p,1}_{\mathrm{rad}}(\mathbb H^n)$ to $L^{p,\infty}_{\mathrm{rad}}(\mathbb H^n)$.
    \end{abstract}

    \maketitle

    \section{Introduction}

    In the 1970s, Stein \cite{Ste76spherical} introduced the spherical maximal operators
    \begin{align*}
        \fM^\alpha(f)(x):=\sup_{t>0}\left|\frac1{\Gamma(\alpha)}\int_{\{y\in\bR^n:|y|<1\}}(1-|y|^2)^{\alpha-1}f(x-ty)\,\d y\right|.
    \end{align*}
    of complex order $\alpha$, with a priori for $\Re\alpha>0$, on $\bR^n$. This definition can be analytically continued to $\Re\alpha>(1-n)/2$. In particular, when $\alpha=0$, $\fM^\alpha$ recovers the standard spherical maximal operator $\fM$ defined by
    \begin{align*}
        \fM(f)(x):=\sup_{t>0}\left|\int_{\{\omega\in\bR^n:|\omega|=1\}}f(x-t\omega)\,\d\omega\right|,
    \end{align*}
    where $\d\omega$ is the canonical surface measure of the unit sphere in $\bR^n$. Stein (\cite[pp. 518--519]{Ste93book} \& \cite[Theorem 2]{Ste76spherical}) showed that for $n\geq 2$, $1<p<\infty$ and $\alpha\in\bR$ with $\alpha>(1-n)/2$, if
    \begin{align*}
        \alpha>
        \begin{cases}
            1-n+\frac np, & 1<p\leq 2,\\
            \frac{2-n}p, & p>2,
        \end{cases}
    \end{align*}
    then $\fM^\alpha$ is bounded on $L^p(\bR^n)$, and that if $\fM^\alpha$ is bounded on $L^p(\bR^n)$ for $1<p\leq 2$, then we must have
    \begin{align*}
        \alpha>1-n+\frac np.
    \end{align*}

    Stein's result with $\alpha=0$ indicates that for $n\geq 3$ and $p>n/(n-1)$, $\fM$ is bounded on $L^p(\bR^n)$, and that for $n\geq 2$, $\fM$ is bounded on $L^p(\bR^n)$ only if $p>n/(n-1)$. When $n=2$, the same sufficient condition $p>2=2/(2-1)$ for the $L^p$-boundedness of $\fM$ was obtained by Bourgain \cite{Bou86circular}. At the endpoint $p=n/(n-1)$, Bourgain \cite{Bou85spherical} showed that $\fM^\alpha$ is bounded from $L^{p,1}(\bR^n)$ to $L^{p,\infty}(\bR^n)$ when $n\geq 3$, and that the same restricted weak-type $(p,p)$ estimate holds for radial functions when $n=2$. However, this $2$-dimensional restricted weak-type estimate fails for general functions, as shown by Seeger, Tao and Wright \cite{STW03sphercial}. For further results concerning $L^p$-boundedness of the operators $\fM^\alpha$, one can refer to \cite{LSSY24spherical,MYZ17spherical,MSS92localsmoothing, Wol00localsmoothing} and the references therein.
    \smallskip

    In the last decade, Ciaurri et al. \cite{CNR17radial,CNR20radial,NRS23radial} investigated $\fM^\alpha$ acting on radial functions. They established the sharp range of $\alpha$ for $\fM^\alpha$ to be bounded on $L^p_{\rad}(\bR^n)$, the radial subspace of $L^p(\bR^n)$, as well as the corresponding endpoint result in terms of restricted weak-type estimates. More precisely, they showed that for $n\geq 2$, $1<p\leq \infty$ and $\alpha\in\bR$ with $\alpha>(1-n)/2$, $\fM^\alpha$ is bounded on $L^p_{\rad}(\bR^n)$ if and only if
    \begin{align*}
        \begin{cases}
            \alpha>1-n+\frac np, & 1<p\leq 2,\\
            \alpha>\frac1p-\frac{n-1}2, & 2<p\leq \frac{2n}{n-1},\\
            \alpha\geq \frac{1-n}p, & p>\frac{2n}{n-1},
        \end{cases}
    \end{align*}
    and that for $1<p\leq 2n/(n-1)$ and
    \begin{align*}
        \alpha=\begin{cases}
            1-n+\frac np, & 1<p\leq 2,\\
            \frac1p-\frac{n-1}2, & 2<p\leq\frac{2n}{n-1},
        \end{cases}
    \end{align*}
    $\fM^\alpha$ is bounded from $L^{p,1}_{\rad}(\bR^n)$ to $L^{p,\infty}_{\rad}(\bR^n)$. See \cite[Corollaries~1.5 and~1.6(b)]{NRS23radial}.
    \smallskip

    In the 1980s, El Kohen \cite{ElK80spherical} introduced on the hyperbolic space $\bH^n$ the spherical maximal operators
    \begin{align*}
        \fm^\alpha(f)(x):=\sup_{t>0}|M^\alpha_t(f)(x)|
    \end{align*}
    of complex order $\alpha$ with $\Re\alpha>0$, where
    \begin{align}\label{eq:S1sphavgdef}
        M^\alpha_t(f)(x):=\frac{(\sinh t)^{2-n}}{\Gamma(\alpha)\,(\cosh t-1)^\alpha}\int_{B(x,t)}\big(\cosh t-\cosh d(x,y)\big)^{\alpha-1}f(y)\,\d y
    \end{align}
    is the spherical averaging operators of order $\alpha$. Here $d(\cdot,\cdot)$ is the geodesic distance, $\d x$ is the Riemannian measure and $B(x,t)$ is the geodesic ball centered at $x$ with radius $t$ in $\bH^n$. The definition can be analytically continued to $\Re\alpha>(1-n)/2$. In particular, at order zero,
    \begin{align}\label{eq:S10order}
        M_t^0(f)(x)=|\bS^{n-1}|\Avg_{S(x,t)}(f),
    \end{align}
    where $S(x,t)$ is the geodesic sphere in $\bH^n$ and $\Avg_{S(x,t)}$ denotes its normalized surface average. El Kohen \cite[Theorem~3]{ElK80spherical} showed that for $n\geq 2$, $1<p<\infty$ and $\alpha\in\bR$ with $\alpha>(1-n)/2$, if
    \begin{align*}
        \alpha>
        \begin{cases}
            1-n+\frac np, & 1<p\leq 2,\\
            \frac{2-n}p, & p>2,
        \end{cases}
    \end{align*}
    then $\fm^\alpha$ is bounded on $L^p(\bH^n)$.

    This result with $\alpha=0$ yields that for $n\geq 3$ and $p>n/(n-1)$, $\fm^0$ is bounded on $L^p(\bH^n)$. The case of $n=2$ was later resolved by Ionescu \cite{Ion00FIO}. More recently, Chen, Shen, Yan and the named author \cite{Che+25spherical} gave the necessary condition for the $L^p$-boundedness of $\fm^\alpha$, and improved the range of $\alpha$ in \cite[Theorem~3]{ElK80spherical}  for $p>2$. They showed that for $n\geq 2$, $1<p<\infty$ and $\alpha\in\bR$ with $\alpha>(1-n)/2$, if $\fM^\alpha$ is bounded on $L^p(\bH^n)$, then we must have
    \begin{align*}
        \begin{cases}
            \alpha>1-n+\frac np, & 1<p\leq 2,\\
            \alpha\geq \frac1p-\frac{n-1}2, & 2<p\leq \frac{2n}{n-1},\\
            \alpha\geq \frac{1-n}p, & p>\frac{2n}{n-1},
        \end{cases}
    \end{align*}
    and that if $p>2$ and
    \begin{align*}
        \alpha>
        \begin{cases}
            \frac{2-n}p-\frac{p-2}{pp_n(p_n-2)}, & 2<p\leq p_n,\\
            \frac{2-n}p-\frac1{pp_n}, & p>p_n
        \end{cases}
    \end{align*}
    with
    \begin{align*}
        p_n=
        \begin{cases}
            4, & n=2,\\
            \frac{2(n+1)}{n-1}, & n\geq 3,
        \end{cases}
    \end{align*}
    then $\fM^\alpha$ is bounded on $L^p(\bH^n)$. See \cite[Theorems~1.1 and~1.2]{Che+25spherical}. In addition, Zhang and the named author \cite{WZ26lacunary} investigated the related problems of maximal operators with the supremum restricted to lacunary radii.
    \smallskip

    In this article we focus on $\fm^\alpha$ acting on \emph{radial} functions, which depend only on $d(x,\0)$ for $\0$ being the origin in $\bH^n$. Our main result is the following.
    \begin{theorem}\label{thm:main}
        Suppose $n\geq 2$, $1<p\leq\infty$ and $\alpha\in\bR$ with $\alpha>(1-n)/2$.
        \begin{asparaenum}[\rm (i)]
            \item\label{thm:main(i)} $\fm^\alpha$ is bounded on $L^p_{\rad}(\bH^n)$ if and only if
            \begin{align*}
                \begin{cases}
                    \alpha>1-n+\frac np, & 1<p\leq 2,\\
                    \alpha>\frac1p-\frac{n-1}2, & 2<p\leq \frac{2n}{n-1},\\
                    \alpha\geq \frac{1-n}p, & p>\frac{2n}{n-1}.
                \end{cases}
            \end{align*}
            \item\label{thm:main(ii)} If
            \begin{align*}
                \alpha=\begin{cases}
                    1-n+\frac np, & 1<p\leq 2,\\
                    \frac1p-\frac{n-1}2, & 2<p\leq\frac{2n}{n-1},
                \end{cases}
            \end{align*}
            then $\fm^\alpha$ is bounded from $L^{p,1}_{\rad}(\bH^n)$ to $L^{p,\infty}_{\rad}(\bH^n)$.
        \end{asparaenum}
    \end{theorem}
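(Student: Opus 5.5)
We split the maximal operator into the local part $\sup_{0<t\le 1}$ and the global part $\sup_{t>1}$, and treat radial functions through their one-dimensional profiles. Write $f(x)=f_0(r)$ with $r=d(x,\0)$. Since the measure on $\bH^n$ in polar coordinates is $|\bS^{n-1}|(\sinh r)^{n-1}\,\d r$, the norm $\|f\|_{L^p(\bH^n)}$ is the $L^p$ norm of $f_0$ on $(0,\infty)$ with weight $(\sinh r)^{n-1}$. By the hyperbolic law of cosines, $M^\alpha_t f$ is again radial, and it can be written as an explicit integral operator acting on $f_0$. For $\Re\alpha>0$ its kernel is
\[
K^\alpha_t(r,s)=\frac{c_n(\sinh t)^{2-n}}{\Gamma(\alpha)(\cosh t-1)^\alpha}\,(\sinh s)^{n-1}\int_0^\pi\big(\cosh t-\cosh r\cosh s+\sinh r\sinh s\cos\theta\big)_+^{\alpha-1}(\sin\theta)^{n-2}\,\d\theta .
\]
For $\alpha\le0$ the kernel is defined by analytic continuation. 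We then obtain the two-sided size estimates of $K^\alpha_t$ on each of its regions: $s$ close to $|r-t|$, $s$ close to $r+t$, and the interior region $|r-t|<s<r+t$. These estimates follow the Euclidean computations of Ciaurri, Nowak and Roncal. Near the singular sets the kernel behaves like a power $(\text{distance})^{\alpha+(n-3)/2}$ times factors of $\sinh$. The key step is to compare these bounds with the Euclidean kernel of $\fM^\alpha$ on radial functions.

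\emph{Local part ($t\le1$).} When $t\le1$ and $r\lesssim 1$, the hyperbolic kernel is comparable to the Euclidean one, because $\sinh u\sim u$ and $\cosh u-1\sim u^2/2$ for small $u$. When $t\le1$ and $r\gg1$, the point is far from the origin, and there the radial structure no longer gives the gain near the origin. We majorize $\sup_{t\le1}|M^\alpha_tf|$ by a one-dimensional local maximal operator (fractional integral or Hardy–Littlewood type) of order $\alpha+(n-1)/2$ in the variable $s$. Transferring the weight $(\sinh s)^{n-1}\approx \e^{(n-1)s}$, which is essentially constant across windows of length $1$, we get $L^p$ bounds whenever $\alpha>(1-n)/2$, with a restricted weak type bound at the endpoint. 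Hence the local part satisfies exactly the conditions in \cite[Corollaries~1.5 and~1.6(b)]{NRS23radial}, including the endpoint estimates from $L^{p,1}$ to $L^{p,\infty}$. We get this either by direct comparison of kernels or by quoting the Euclidean radial result.

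\emph{Global part ($t>1$).} Here exponential volume growth is the main issue. For $t>1$ we show the pointwise bound
\[
|M^\alpha_t f(x)|\lesssim \e^{-(n-1)t}\!\!\int\!|f_0(s)|\,(\text{singular factor})\,\e^{(n-1)s}\,\d s ,
\]
with gains $\e^{-(n-1)(r+t-s)/2}$ away from the outer singularity. Applying Hölder's inequality in $s$ and taking the supremum over $t$ should then give $L^p$ boundedness for $p>2$ under $\alpha\ge(1-n)/p$, and for $p\le2$ under weaker conditions. The regime $s\approx r+t$ is modelled by the Kunze–Stein phenomenon, which suits $p\le2$ and gives a restricted weak type at the critical $\alpha$. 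The regime $s\approx |r-t|$ reduces, for $r$ large, to a Hardy-type operator with exponential weights. I expect the main obstacle to be the endpoint case $p>2n/(n-1)$, $\alpha=(1-n)/p$. There, the singularity $(s-|r-t|)^{\alpha+(n-3)/2}$, which is negative, interacts with the weight. We must handle it through a careful dyadic decomposition in $s-|r-t|$ and in $t$, then sum the pieces using the exact borderline exponent. We will try Bourgain's interpolation trick for the endpoint estimates from $L^{p,1}$ to $L^{p,\infty}$.

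\emph{Necessity.} We test $\fm^\alpha$ on small balls $f=\1_{B(\0,\delta)}$, which gives $\alpha>1-n+n/p$ from the blow-up near $t=r$ for $r\sim1$, exactly as in the Euclidean case. Testing on thin annuli $f=\1_{\{|d(x,\0)-1|<\delta\}}$ gives $\alpha>1/p-(n-1)/2$. Testing on $f_0(s)=\e^{-(n-1)s/p}s^{-1/p}(\log s)^{-2/p}$, a function of the critical $L^p$ type that almost fails to be integrable, with $t\approx r$, gives $\alpha\ge(1-n)/p$, since there the supremum picks up the factor $\e^{(n-1)t}$ against the normalization. These arguments are local or follow \cite{Che+25spherical}. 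Finally, we must check that equality in the second condition fails for the strong type. This follows as in the Euclidean radial case from a logarithmic divergence on annuli, and the local comparison transfers it.
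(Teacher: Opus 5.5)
The skeleton of your proposal matches the paper: the radial kernel via the cosine law, the split into $t\le1$ and $t>1$, and comparison with the Euclidean radial theory of \cite{NRS23radial} near $\0$. There are, however, genuine gaps.

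\textbf{The global part $t>1$ is not proved.} Your sketch (``should then give'', ``we will try'') does not close. Applying H\"older in $s$ for each $t$ and then taking $\sup_{t>1}$ gives at best $(\sinh R)^{-\frac{n-1}p}\|f\|_{L^p}$, already for $t\in[1,2]$. This function is not in $L^p((\sinh R)^{n-1}\d R)$.

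The missing idea is to take the supremum over unit intervals $T\le t\le T+1$ and prove the summable bound $\|\fm^\alpha_T\|\lesssim T\e^{-c_pT}$, where $c_p=(n-1)\min\{\frac1p,\frac1{p'}\}$.
\begin{itemize}
\item For $R,s\ge\frac1{10}$, the kernel is $\lesssim\e^{-\frac{n-1}2t}(\sinh R\,\sinh s)^{\frac{1-n}2}$ times singular factors at $s=|R-t|$ and $s=R+t$. These factors lie in $L^{p'}$ (resp.\ $L^{p',\infty}$) exactly when $\alpha>$ (resp.\ $\geq$) $\frac1p-\frac{n-1}2$. Lorentz--H\"older with $f$ localized to $[R-T-1,R+T+1]$, followed by summation over $R$-windows of length $T$, gives the bound.
\item The two remaining pieces ($s\le\frac1{10}$ with $R\approx t$, and $R\le\frac1{10}$ with $s\approx t$) are shifted by $T-2$ and dominated by the Euclidean positive majorant $\fe^\alpha$. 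The decay factors $\e^{-\frac{n-1}{p'}T}$ and $\e^{-\frac{n-1}pT}$ come from the volume.
\end{itemize}
Neither Kunze--Stein nor Bourgain interpolation is needed. Your anticipated obstacle is also misplaced: for $p>\frac{2n}{n-1}$ and $\alpha=\frac{1-n}p$ one has $\alpha>\frac1p-\frac{n-1}2$. So the singularity at $s=|r-t|$ is harmless in $L^{p'}$, and the endpoint difficulty sits only near $\0$, where it is inherited from the Euclidean result.

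\textbf{Your local claim for large $r$ is false.} The operator $\sup_{0<t\le1}\int_{-1}^1(1-u^2)^{\alpha+\frac{n-3}2}|f(R-tu)|\,\d u$ is $L^p$-bounded only for $\alpha>\frac1p-\frac{n-1}2$, with restricted weak type at equality. It is not bounded for all $\alpha>\frac{1-n}2$. This piece is precisely the source of the second condition, as your own thin-annulus test (which works at any radius $\ge1$) shows.

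You also do not say how the open strong-type range is reached. The paper interpolates restricted weak-type bounds at two nearby exponents, then removes the restriction $\alpha<\frac1p$ by monotonicity in $\alpha$ (Lemma~\ref{lem:S2raising}).

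\textbf{Your third necessity test fails.} $M^\alpha_t$ is normalized so that $M^\alpha_t1$ is bounded uniformly in $t$, so there is no factor $\e^{(n-1)t}$ to gain. A slowly varying profile like $\e^{-(n-1)s/p}s^{-1/p}(\log s)^{-2/p}$ never activates the singularity of order $\alpha+\frac{n-3}2$, so it cannot detect $\alpha$. The condition $\alpha\geq\frac{1-n}p$ is local, as in $\bR^n$. Take $f_\varepsilon=\1_{[1+\delta\varepsilon/2,\,1+\delta\varepsilon]}$, $R\in[\varepsilon,2\varepsilon]$ and $t=1-R+2\delta\varepsilon$. Then $M^\alpha_tf_\varepsilon(R)\gtrsim\varepsilon^\alpha$ while $\|f_\varepsilon\|_{L^p}\sim\varepsilon^{1/p}$, which gives the ratio $\varepsilon^{\alpha+\frac{n-1}p}$.

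\textbf{Your ball test is also incomplete.} Evaluated at $r\sim1$, it yields only $\alpha\geq1-n+\frac np$. Strictness requires letting $R$ range over $[5\varepsilon,\frac1{10}]$, which produces a $\log\frac1\varepsilon$ divergence at equality. Moreover, for $\alpha<0$ the continued kernel changes sign: the region $t>R+r$ carries the factor $1/\Gamma(\alpha)$, and the $\bF$ factor can become negative. The test functions should therefore be thin annuli placed so that only $|R-r|<t<R+r$ contributes, with a small hypergeometric argument. With a full ball you would have to control this cancellation.
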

    The relations between $\alpha$ and $p$ in Theorem~\ref{thm:main} are indicated in Figure~\ref{fig:1} below.
    \begin{figure}[H]
        \centering
        \begin{tikzpicture}[scale=5,>=Stealth]
            \fill[black!10] (0,0)--(.25,-.25)--(.5,0)--(1,1)--(1,1.18)--(0,1.18)--cycle;
            \draw[->] (-.25,0)--(1.18,0) node[right] {$1/p$};
            \draw[->] (0,-.38)--(0,1.3) node[above] {$\alpha$};
            \draw[black,very thick] (0,0)--(.25,-.25) node[below right] {$A$};
            \draw[black,very thick,dashed] (.25,-.25)--(.5,0) node[above left] {$B$}--(1,1);
            \filldraw[fill=white,draw=black,thick] (.25,-.25) circle (0.5pt);
            \fill[black] (0,0) circle (0.5pt);
            \draw (.25,-.01)--(.25,0.01) node[above right] {$\frac14$};
            \draw (.5,.01)--(.5,-0.01) node[below] {$\frac12$};
            \draw (1,.01)--(1,-0.01) node[below] {$1$};
            \draw (-0.01,-.25)--(.01,-.25) node[left] {$-\frac14$};
            \draw (-0.01,1)--(.01,1) node[left] {$1$};
            \node[below left] at (0,0) {$O$};
            \draw[dashed] (0,-.25)--(.25,-.25)--(.25,0);
            \draw[dashed] (0,1)--(1,1) node[below right] {$C$};
            \draw[dashed] (1,1.18)--(1,0);
        \end{tikzpicture}
        \caption{The $(1/p,\alpha)$-plot for $n=2$. $\fm^\alpha$ is bounded on $L^p_{\rad}(\bH^n)$ if and only if $(1/p,\alpha)$ is on or above the solid segment $OA$ (not containing $A$), or strictly above the dashed folded segments $ABC$. In addition, if $(1/p,\alpha)$ is on the dashed folded segments $ABC$ (containing $A$), $\fm^\alpha$ is bounded from $L^{p,1}_{\rad}(\bH^n)$ to $L^{p,\infty}_{\rad}(\bH^n)$.}
        \label{fig:1}
    \end{figure}
    \smallskip

    We would like to mention that according to the arguments in \cite{Che+25spherical}, the threshold of the range of $\alpha$ in Theorem~\ref{thm:main}\eqref{thm:main(i)} corresponds to both the threshold of the Lebesgue index and the growth in time in the local smoothing estimates for wave equation with radial initial datum, obtained by Chen, Duong, Yan and the named author \cite{CDWY26radial}. In addition, both the strong-type and the restricted weak-type boundedness coincide with its Euclidean counterpart \cite[Corollaries~1.5 and~1.6(b)]{NRS23radial}, indicating that the geometry at infinity induced by the negative curvature of $\bH^n$ imposes no additional restriction on the $L^p_{\rad}$-boundedness. Based on this observation, we conjecture that the sufficient condition for the general $L^p$-boundedness of $\fm^\alpha$ in \cite[Theorems~1.1 and~1.2]{Che+25spherical} can be sharpened so that there is no gap between them.

    To show Theorem~\ref{thm:main}, we first derive the explicit radial kernel expressions of the operator $M^\alpha_t$ in terms of the Gauss hypergeometric function. We then decompose the operator $\fm^\alpha$ into local and global parts, with supremums taken over all small and large radii, respectively. The local part can be essentially dominated by the maximal operators with positive Euclidean radial kernels studied in \cite{CNR17radial,CNR20radial,NRS23radial}. For the global part, we continue splitting the supremum into unit intervals and use the exponential decay of the kernel to obtain summable estimates.

    The rest of this article is organized as follows. Section~\ref{sec:prelim} recalls basic facts about hyperbolic spaces and establishes the explicit radial kernel expressions and estimates. With these preparations, in Sections~\ref{sec:proof} we prove Theorem~\ref{thm:main}.

    \section{Preliminaries}\label{sec:prelim}

    In this section, we recall basic facts about the hyperbolic space $\bH^n$ and obtain the explicit radial kernel expressions of the spherical maximal operators.

    \subsection{\texorpdfstring{$\bH^n$}{Hn} as the hyperboloid}

    On $\bR^{1+n}$ we define the Minkowski metric
    \begin{align*}
        [x,y]=x_0y_0-x'\cdot y',\quad(x_0,x'),\,(y_0,y')\in\bR^{1+n}.
    \end{align*}
    The hyperboloid model is
    \begin{align*}
        \bH^n:=\{x=(x_0,x')\in \bR^{1+n}:\ [x,x]=1,\, x_0>0\}.
    \end{align*}
    For $x\in\bH^n$, its tangent space is
    \begin{align*}
        T_x\bH^n:=\{v\in\bR^{1+n}:\ [v,x]=0\}.
    \end{align*}
    The Riemannian metric $\bg$ is given by the bilinear form
    \begin{align*}
        \langle v,w\rangle_x:=-[v,w],\quad v,w\in T_x\bH^n.
    \end{align*}
    This metric gives the complete simply connected space of constant sectional curvature $-1$. Its geodesic distance is given by
    \begin{align*}
        d(x,y):=\arcosh([x,y]).
    \end{align*}

    Take $\0:=(1,0,\dots,0)\in \bH^n$. The geodesic polar coordinates at $\0$ are
    \begin{align*}
        x=(\cosh R,\sinh R\, \theta),\quad R>0,\ \theta\in\bS^{n-1}.
    \end{align*}
    In these coordinates the Riemannian metric reads
    \begin{align*}
        \bg=\d R^2+(\sinh R)^2\bg_{\bS^{n-1}}.
    \end{align*}
    The induced Riemannian measure is therefore given by
    \begin{align}\label{eq:S2Riemmeas}
        \d x=(\sinh R)^{n-1}\d R\d \theta.
    \end{align}

    If $x=(\cosh R,\sinh R\, \theta)$ and $y=(\cosh r,\sinh r\, \omega)$,
    then we have the hyperbolic cosine law:
    \begin{align}\label{eq:S2coslaw}
        \cosh d(x,y)=\cosh R\cosh r-\sinh R\sinh r\,\theta\cdot\omega.
    \end{align}

    Radial functions on $\bH^n$ depend on $R(x):=d(x,\0)$ only. In other words, for such a function $f$, there exists another function $h$, namely the \emph{radial profile} of $f$, defined on $(0,\infty)$, such that
    \begin{align}\label{eq:S2radfunct}
        f(x)=h(R(x))\quad\text{for }x\ne \0.
    \end{align}
    In the sequel, we always identify $f$ with its radial profile. Note that in view of \eqref{eq:S2Riemmeas},
    \begin{align}\label{eq:S2radialintegral}
        \int_{\bH^n}f(x)\,\d x=\nu_{n-1}\int_0^\infty f(r)\,(\sinh r)^{n-1}\d r,
    \end{align}
    where $\nu_{n-1}=|\bS^{n-1}|$.

    \subsection{Explicit kernel expressions for \texorpdfstring{$M^\alpha_t$}{Mαt}}

    For $\Re c>\Re b>0$ and $|z|<1$, let
    \begin{align}\label{eq:S2hypergeom}
        F(a,b;c;z):=\frac{\Gamma(c)}{\Gamma(b)\,\Gamma(c-b)}\int_0^1 (1-\tau)^{c-b-1}\tau^{b-1}(1-z\tau)^{-a}\d \tau
    \end{align}
    be the Gauss hypergeometric function. We further define
    \begin{align}\label{eq:S2bF}
        \bF(a,b;c;z):=\frac{F(a,b;c;z)}{\Gamma(c)}.
    \end{align}
    We need the following analytic facts about the functions $F$ and $\bF$.
    \begin{lemma}\label{lem:S2hypfunfacts}
        Suppose $z\in\bC$ with $|z|<1$.
        \begin{asparaenum}[\rm (1)]
            \item\label{lem:S2hypfunfacts(1)} For $\Re c>\Re b>0$ the function $F(a,b;c;z)$ is an entire function in $a$.
            \item\label{lem:S2hypfunfacts(2)} The function $\bF(a,b;c;z)$ is an entire function in $c$.
            \item\label{lem:S2hypfunfacts(3)} The functions $F$ and $\bF$ satisfy
            \begin{align*}
                |F(a,b;c;z)|+|\bF(a,b;c;z)|\lesssim
                \begin{cases}
                    (1-|z|)^{\Re(c-a-b)}, & a+b>c,\\
                    1+\log\frac1{1-|z|}, & a+b=c,\\
                    1, & a+b<c.
                \end{cases}
            \end{align*}
            In particular, for $\Re(c-a-b)>-1$, $F$ and $\bF$ are both locally integrable in $z$.
        \end{asparaenum}
    \end{lemma}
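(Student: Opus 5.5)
These are standard facts about the Gauss hypergeometric function, and I would derive all three from the Euler integral \eqref{eq:S2hypergeom}, supplemented where needed by the power series and the connection formulas.

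\emph{Part (1).} Fix $b,c$ with $\Re c>\Re b>0$ and $|z|<1$. For $\tau\in[0,1]$ we have $|z\tau|\le|z|<1$, so $1-z\tau$ stays in the disc centered at $1$ of radius $|z|$. This disc lies in the right half-plane, so on the principal branch the map $a\mapsto(1-z\tau)^{-a}=\exp(-a\log(1-z\tau))$ is entire. On compact sets of $a$ it is bounded uniformly in $\tau$, because $|\log(1-z\tau)|$ is bounded in terms of $|z|$. The weight $(1-\tau)^{c-b-1}\tau^{b-1}$ is integrable since $\Re b>0$ and $\Re(c-b)>0$. Holomorphy therefore passes under the integral, by Morera's theorem together with Fubini, or by differentiating under the integral sign with dominated convergence.

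\emph{Part (2).} I would switch to the power series
\[
\bF(a,b;c;z)=\sum_{k\ge0}\frac{(a)_k(b)_k}{\Gamma(c+k)\,k!}z^k .
\]
It agrees with $F/\Gamma(c)$ in the Euler range: expand $(1-z\tau)^{-a}$ binomially and integrate term by term with the Beta integral, then extend by analytic continuation in $b$. The function $1/\Gamma(c+k)$ is entire in $c$. On compact sets of $c$ one has $|1/\Gamma(c+k)|\lesssim 1/(k-C)!$ for large $k$, so the series converges locally uniformly in $c$ for $|z|<1$. Hence $\bF$ is entire in $c$.

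\emph{Part (3).} This is the only part with real content. I read the case conditions as conditions on the real parts. In the range $\Re c>\Re b>0$ I would estimate directly: $|(1-z\tau)^{-a}|\lesssim|1-z\tau|^{-\Re a}$ and $|1-z\tau|\ge 1-|z|\tau$. This reduces the bound to
\[
\int_0^1(1-\tau)^{\Re(c-b)-1}\tau^{\Re b-1}(1-|z|\tau)^{-\Re a}\,\d\tau .
\]
Near $\tau=1$, set $s=1-\tau$ and $\delta=1-|z|$, so that $1-|z|\tau\approx \delta+s$. The integral $\int_0^1 s^{\Re(c-b)-1}(\delta+s)^{-\Re a}\,\d s$ behaves like
\begin{itemize}
\item[] $\delta^{\Re(c-a-b)}$ when $\Re(c-b)<\Re a$,
\item[] $\log(1/\delta)$ when the two are equal,
\item[] $1$ otherwise,
\end{itemize}
which gives the three cases.

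For parameters outside the Euler range, the route I expect to be the main obstacle, I would use contiguous relations to shift $b$ and $c$ into the Euler range. The alternative is the Euler transformation $F(a,b;c;z)=(1-z)^{c-a-b}F(c-a,c-b;c;z)$, combined with the connection formula at $z=1$, which separates the singular term $(1-z)^{c-a-b}$ from a regular part. At integer coincidences the logarithm emerges as a limit. Since the paper only uses $|z|<1$ with real parameters in its applications, I would simply cite standard references (e.g.\ the DLMF asymptotics $15.4$) for these boundary behaviors rather than reprove them. Local integrability then follows because both $(1-|z|)^{\Re(c-a-b)}$ with $\Re(c-a-b)>-1$ and the logarithm are integrable near $|z|=1$.
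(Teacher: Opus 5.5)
Your parts (1) and (2) are the paper's argument: the Euler integral with Morera, and the series with the entire factors $1/\Gamma(c+j)$. For (3) you take a different and heavier route. The paper stays with the series from (2). Since $|\Gamma(x+j)/\Gamma(y+j)|\sim j^{\Re(x-y)}$, the $j$-th coefficient of $F$ and of $\bF$ is $O\big((1+j)^{\Re(a+b-c)-1}\big)$. Then $\sum_j(1+j)^{\beta-1}|z|^j$ is $\lesssim(1-|z|)^{-\beta}$, $1+\log\frac1{1-|z|}$ or $1$ according as $\beta>0$, $\beta=0$ or $\beta<0$. That one line covers all parameters and all complex $|z|<1$ at once.

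Your Euler-integral estimate is correct where it applies, apart from one slip. For $\Re a<0$ you should use $|1-z\tau|^{-\Re a}\le 2^{-\Re a}$, because the inequality $|1-z\tau|\ge 1-|z|\tau$ points the wrong way there. The real problem is that the range $\Re c>\Re b>0$ misses exactly the case the paper needs. In Lemma~\ref{lem:S2kerexp}\eqref{lem:S2kerexp(ii)} the function is $\bF(\frac{3-n}2,\frac{n-1}2;\alpha+\frac{n-1}2;z)$. In either ordering of the first two parameters, this lies in the Euler range only when $\alpha>0$. That excludes $\alpha=0$ and all the thresholds for $p>2$.

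So in the case that matters, your (3) rests on a citation. Citing the connection formulas at $z=1$, including the logarithmic cases $c-a-b\in\mathbb Z$, is legitimate and does give these bounds. The contiguous-relation option, however, needs care. A relation in which $F(a,b;c;z)$ carries a coefficient vanishing at $z=1$, such as Gauss's three-term relation in $c$, costs a full factor $(1-|z|)^{-1}$. That loss is not sharp in precisely the relevant range $-1<\Re(c-a-b)<0$. A relation that does work when $\Re b>0$ is
\[
F(a,b;c;z)=F(a,b;c+1;z)+\frac{abz}{c(c+1)}\,F(a+1,b+1;c+2;z).
\]
It keeps $c-a-b$ unchanged in the second term and raises it in the first. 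It also raises $c-b$ by one in both terms. Hence finitely many iterations land every term in the Euler range with all exponents at least $\Re(c-a-b)$. Still, since you already wrote the series in (2), the paper's coefficient bound is the shorter and fully self-contained proof of (3).
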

    \begin{proof}
        \eqref{lem:S2hypfunfacts(1)} follows readily from \eqref{eq:S2hypergeom} and Morera's Theorem.

        To show \eqref{lem:S2hypfunfacts(2)}, it follows from \cite[(2) and (5) on pp. 56--57]{Erd+53transcendentalI} that
        \begin{align*}
            \bF(a,b;c;z)=\sum_{j=0}^\infty\frac{\Gamma(a+j)}{\Gamma(a)}\frac{\Gamma(b+j)}{\Gamma(b)}\frac{z^j}{j!\,\Gamma(c+j)}.
        \end{align*}
        Since for each $j=0,1\dots$, $1/\Gamma(\cdot+j)$ is entire, for $|z|<1$ the above function is entire in $c$.

        To show \eqref{lem:S2hypfunfacts(3)}, we apply \cite[(2) and (5) on pp. 56--57]{Erd+53transcendentalI} to derive
        \begin{align*}
            |F(a,b;c;z)|+|\bF(a,b;c;z)|\lesssim \sum_{j=0}^\infty (1+j)^{\Re(a+b-c)-1}|z|^j.
        \end{align*}
        This series is dominated by the desired estimate, so are $F(a,b;c;z)$ and $\bF(a,b;c;z)$.
    \end{proof}

    Recall that for $\Re\alpha>0$, the spherical means $M^\alpha_t$ on $\bH^n$ are defined by \eqref{eq:S1sphavgdef}. We have the following explicit expression.
    \begin{lemma}\label{lem:S2kerexp}
        Suppose $n\geq 2$ and $\alpha\in\bC$. For a smooth compactly supported radial function $f$, $M^\alpha_t(f)$ can be analytically continued to $\Re\alpha>(1-n)/2$ and is also radial. We can write
        \begin{align*}
            M^\alpha_t(f)(R)=\int_0^\infty K^\alpha_t(R,r)\, f(r)\, (\sinh r)^{n-1}\d r,
        \end{align*}
        where for $t,R,r>0$ away from $t=|R-r|$ and $t=R+r$, $K^\alpha_t$ is given as follows:
        \begin{asparaenum}[\rm (i)]
            \item\label{lem:S2kerexp(i)} If $t<|R-r|$, then $K^\alpha_t(R,r)=0$.
            \item\label{lem:S2kerexp(ii)} If $|R-r|<t<R+r$, then
            \begin{align*}
                K^\alpha_t(R,r)={}&2^{\frac{n-3}2}\nu_{n-2}\Gamma\!\left(\frac{n-1}2\right)\frac{(\sinh t)^{2-n}}{(\cosh t-1)^\alpha}(\sinh R\, \sinh r)^{\frac{1-n}2}(\cosh t-\cosh(R-r))^{\alpha+\frac{n-3}2}\\
                {}&\times \bF\!\left(\frac{3-n}2,\frac{n-1}2;\alpha+\frac{n-1}2;\frac{\cosh t-\cosh(R-r)}{2\sinh R\,\sinh r}\right).
            \end{align*}
            \item\label{lem:S2kerexp(iii)} If $t>R+r$, then
            \begin{align*}
                K^\alpha_t(R,r)={}&\frac{2^{n-2}\nu_{n-2}\Gamma(\frac{n-1}2)^2}{\Gamma(\alpha)\,\Gamma(n-1)}\frac{(\sinh t)^{2-n}}{(\cosh t-1)^\alpha}(\cosh t-\cosh (R-r))^{\alpha-1}\\
                {}&\times F\!\left(1-\alpha,\frac{n-1}2;n-1;\frac{2\sinh R\,\sinh r}{\cosh t-\cosh(R-r)}\right).
            \end{align*}
        \end{asparaenum}
        Moreover, for $\Re\alpha>(1-n)/2$, the values on $t=|R-r|$ and $t=R+r$ may be assigned arbitrarily.
    \end{lemma}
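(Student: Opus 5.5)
We will compute $M^\alpha_t(f)(R)$ first for $\Re\alpha>0$ directly from \eqref{eq:S1sphavgdef}, and then continue analytically in $\alpha$. By rotation invariance we may take $x=(\cosh R,\sinh R\,\theta_0)$ with $\theta_0$ fixed. Writing $y=(\cosh r,\sinh r\,\omega)$ and using \eqref{eq:S2Riemmeas} gives
\begin{align*}
M^\alpha_t(f)(R)=\int_0^\infty f(r)(\sinh r)^{n-1}K^\alpha_t(R,r)\,\d r .
\end{align*}
Here $K^\alpha_t(R,r)$ is the prefactor $(\sinh t)^{2-n}/(\Gamma(\alpha)(\cosh t-1)^\alpha)$ times
\begin{align*}
\int_{\bS^{n-1}}\big(\cosh t-\cosh d(x,y)\big)_+^{\alpha-1}\,\d\omega .
\end{align*}
Radiality of the result is then evident. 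We reduce the sphere integral to one variable $s=\theta_0\cdot\omega$, using $\d\omega=\nu_{n-2}(1-s^2)^{(n-3)/2}\d s$. The cosine law \eqref{eq:S2coslaw} gives
\begin{align*}
\cosh t-\cosh d(x,y)=\big(\cosh t-\cosh(R-r)\big)-\sinh R\sinh r\,(1-s).
\end{align*}
Set $A=\cosh t-\cosh(R-r)$, $B=2\sinh R\sinh r$ and $u=(1-s)/2\in[0,1]$. The kernel then becomes a constant times
\begin{align*}
\int_0^1 (A-Bu)_+^{\alpha-1}\,\big(u(1-u)\big)^{\frac{n-3}2}\,\d u .
\end{align*}

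Next we split into the three regimes. Since $A-B=\cosh t-\cosh(R+r)$, the sign of $A$ and of $A-B$ determines them. If $t<|R-r|$, then $A<0$, the integrand vanishes, and this gives (i). If $t>R+r$, then $A>B>0$, so the integral runs over all of $[0,1]$. Factoring out $A^{\alpha-1}$ leaves $(1-zu)^{\alpha-1}$ with $z=B/A<1$, and \eqref{eq:S2hypergeom} with $a=1-\alpha$, $b=(n-1)/2$, $c=n-1$ identifies the integral as $\frac{\Gamma(\frac{n-1}2)^2}{\Gamma(n-1)}A^{\alpha-1}F(1-\alpha,\frac{n-1}2;n-1;B/A)$, which is (iii). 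If $|R-r|<t<R+r$, then $0<A<B$ and the integral is cut at $u=A/B$. Substituting $u=(A/B)\tau$ gives
\begin{align*}
(A/B)^{\frac{n-1}2}A^{\alpha-1}\int_0^1(1-\tau)^{\alpha-1}\tau^{\frac{n-3}2}\big(1-\tfrac AB\tau\big)^{\frac{n-3}2}\d\tau .
\end{align*}
By \eqref{eq:S2hypergeom} with $a=\frac{3-n}2$, $b=\frac{n-1}2$, $c=\alpha+\frac{n-1}2$, this equals $\Gamma(\alpha)\Gamma(\frac{n-1}2)\,\bF(\frac{3-n}2,\frac{n-1}2;\alpha+\frac{n-1}2;A/B)$ times $B^{\frac{1-n}2}A^{\alpha+\frac{n-3}2}$. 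The factor $\Gamma(\alpha)$ cancels the $1/\Gamma(\alpha)$ in front. Collecting powers of $2$ from $B$ and from $\d s=2\,\d u$ should reproduce the constant $2^{\frac{n-3}2}\nu_{n-2}\Gamma(\frac{n-1}2)$ in (ii). This is routine bookkeeping that we will check but not belabor.

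The main obstacle is the analytic continuation to $\Re\alpha>(1-n)/2$ and the claim that the values on $t=|R-r|$ and $t=R+r$ are irrelevant. The plan has three steps.
\begin{enumerate}
\item The expressions in (ii) and (iii) are holomorphic in $\alpha$ for fixed $(t,R,r)$ off the two lines. In (ii) this is Lemma~\ref{lem:S2hypfunfacts}\eqref{lem:S2hypfunfacts(2)}, since the $\Gamma(\alpha)$ has been absorbed into $\bF$. In (iii) it is Lemma~\ref{lem:S2hypfunfacts}\eqref{lem:S2hypfunfacts(1)}, with $\Re c>\Re b>0$ holding automatically, together with the entire factor $1/\Gamma(\alpha)$.
\item Integrated against a smooth compactly supported $f(r)(\sinh r)^{n-1}$, the kernel is locally integrable in $r$ with bounds locally uniform in $\alpha$.
\begin{itemize}
\item Near $t=|R-r|$ we have $A\to0^+$. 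The $\bF$ factor is bounded there, so the kernel behaves like $A^{\alpha+\frac{n-3}2}$, which is integrable in $r$ exactly when $\Re\alpha>(1-n)/2$.
\item Near $t=R+r$, approached from (ii), we have $z=A/B\to1$ with $c-a-b=\alpha+\frac{n-3}2$. Lemma~\ref{lem:S2hypfunfacts}\eqref{lem:S2hypfunfacts(3)} gives a bound $(1-z)^{\Re\alpha+\frac{n-3}2}$ (or a logarithm), integrable since its exponent exceeds $-1$.
\item Approached from (iii), we have $c-a-b=\alpha+\frac{n-3}2$ again. The factor $(1-z)^{\Re\alpha+\frac{n-3}2}$ combines with $A^{\alpha-1}\sim$ const, so it is again integrable.
\end{itemize}
\item By Morera/Fubini, the integral $\int K^\alpha_t f(\sinh r)^{n-1}\,\d r$ is holomorphic on $\Re\alpha>(1-n)/2$. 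It agrees with $M^\alpha_t(f)$ for $\Re\alpha>0$, so it furnishes the continuation. Since the lines $t=|R-r|$ and $t=R+r$ have measure zero in $r$, the kernel values there may be assigned arbitrarily.
\end{enumerate}
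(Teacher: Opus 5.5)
Your proposal is correct and follows essentially the same route as the paper. You reduce the sphere integral to a one-variable Beta-type integral (your $u=(1-s)/2$ is an affine rescaling of the paper's $v$), split into the three regimes by the signs of $A$ and $A-B$, match the Euler integral \eqref{eq:S2hypergeom}, and continue in $\alpha$ via Lemma~\ref{lem:S2hypfunfacts}. Your explicit Morera/Fubini step, and your integrability check near $t=R+r$ from the side of (iii), are simply more detailed than the paper, which only invokes local integrability of the expression in (ii).
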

    \begin{proof}
        Assume first $\Re\alpha>0$. In view of \eqref{eq:S1sphavgdef}, \eqref{eq:S2radialintegral} and \eqref{eq:S2coslaw}, for $x=(\cosh R,\sinh R\,\theta)$,
        \begin{align*}
            {}&M^\alpha_t(f)(x)\\
            ={}&\frac{(\sinh t)^{2-n}}{\Gamma(\alpha)\,(\cosh t-1)^\alpha}\int_0^\infty f(r)\, (\sinh r)^{n-1}\\
            {}&\times\left(\int_{\bS^{n-1}}(\cosh t-\cosh R\,\cosh r+\sinh R\,\sinh r\,\theta\cdot\omega)_+^{\alpha-1}\d\omega\right)\d r\\
            ={}&\frac{\nu_{n-2}(\sinh t)^{2-n}}{\Gamma(\alpha)\,(\cosh t-1)^\alpha}\int_0^\infty f(r)\, (\sinh r)^{n-1}\\
            {}&\times\left(\int_{-1}^1(\cosh t-\cosh R\,\cosh r+\sinh R\,\sinh r\,w)_+^{\alpha-1}(1-w^2)^{\frac{n-3}2}\d w\right)\d r\\
            ={}&\frac{\nu_{n-2}(\sinh t)^{2-n}}{\Gamma(\alpha)\,(\cosh t-1)^\alpha}\int_0^\infty f(r)\, (\sinh r)^{n-1}(\sinh R\,\sinh r)^{2-n}\\
            {}&\times\left(\int_{\cosh(R-r)}^{\cosh(R+r)}(\cosh t-v)_+^{\alpha-1}\big(v-\cosh(R-r)\big)^{\frac{n-3}2}\big(\cosh(R+r)-v\big)^{\frac{n-3}2}\d v\right)\d r,
        \end{align*}
        where $(A)_+^{\alpha-1}$ denotes $A^{\alpha-1}\1_{A>0}$. Hence
        \begin{align}\label{eq:S2kerintexp}
            K^\alpha_t(R,r)={}&\frac{\nu_{n-2}(\sinh t)^{2-n}}{\Gamma(\alpha)\,(\cosh t-1)^\alpha}(\sinh R\,\sinh r)^{2-n}\\
            {}&\times\int_{\cosh(R-r)}^{\cosh(R+r)}(\cosh t-v)_+^{\alpha-1}\big(v-\cosh(R-r)\big)^{\frac{n-3}2}\big(\cosh(R+r)-v\big)^{\frac{n-3}2}\d v.\notag
        \end{align}

        Now suppose $\Re\alpha>(1-n)/2$. \eqref{lem:S2kerexp(i)} follows from analytic continuation and the fact that if $\cosh t<\cosh(R-r)$ then the integrand in \eqref{eq:S2kerintexp} is zero.

        To see \eqref{lem:S2kerexp(ii)}, note that if $\cosh(R-r)<\cosh t<\cosh (R+r)$, letting
        \begin{align*}
            s:=\frac{v-\cosh(R-r)}{\cosh t-\cosh(R-r)}
        \end{align*}
        in the integral in \eqref{eq:S2kerintexp}, we have
        \begin{align*}
            K^\alpha_t(R,r)={}&\frac{2^{\frac{n-3}2}\nu_{n-2}(\sinh t)^{2-n}}{\Gamma(\alpha)\,(\cosh t-1)^\alpha}(\sinh R\,\sinh r)^{\frac{1-n}2}\big(\cosh t-\cosh(R-r)\big)^{\alpha+\frac{n-3}2}\\
            {}&\times \int_0^1(1-s)^{\alpha-1}s^{\frac{n-3}2}\left(1-\frac{\cosh t-\cosh(R-r)}{2\sinh R\,\sinh r}s\right)^{\frac{n-3}2}\d s.
        \end{align*}
        In view of \eqref{eq:S2hypergeom},
        \begin{align*}
            K^\alpha_t(R,r)={}&2^{\frac{n-3}2}\nu_{n-2}\Gamma\!\left(\frac{n-1}2\right)\frac{(\sinh t)^{2-n}}{(\cosh t-1)^\alpha}(\sinh R\, \sinh r)^{\frac{1-n}2}(\cosh t-\cosh(R-r))^{\alpha+\frac{n-3}2}\\
            {}&\times \frac1{\Gamma(\alpha+\frac{n-1}2)}F\!\left(\frac{3-n}2,\frac{n-1}2;\alpha+\frac{n-1}2;\frac{\cosh t-\cosh(R-r)}{2\sinh R\,\sinh r}\right).
        \end{align*}
        It remains to invoke \eqref{eq:S2bF} to obtain the desired expression. Then one uses Lemma~\ref{lem:S2hypfunfacts}\eqref{lem:S2hypfunfacts(2)} to analytically continue $K^\alpha_t$ to $\Re\alpha>(1-n)/2$.

        To see \eqref{lem:S2kerexp(iii)}, note that if $\cosh t>\cosh(R+r)$, letting
        \begin{align*}
            s=\frac{v-\cosh(R-r)}{2\sinh R\,\sinh r}
        \end{align*}
        in the integral in \eqref{eq:S2kerintexp}, we have
        \begin{align*}
            K^\alpha_t(R,r)={}&\frac{2^{n-2}\nu_{n-2}(\sinh t)^{2-n}}{\Gamma(\alpha)\,(\cosh t-1)^\alpha}\big(\cosh t-\cosh(R-r)\big)^{\alpha-1}\\
            {}&\times \int_0^1(1-s)^{\frac{n-3}2}s^{\frac{n-3}2}\left(1-\frac{2\sinh R\,\sinh r}{\cosh t-\cosh(R-r)}s\right)^{\alpha-1}\d s.
        \end{align*}
        In view of \eqref{eq:S2hypergeom}, we obtain the desired expression. Then one uses Lemma~\ref{lem:S2hypfunfacts}\eqref{lem:S2hypfunfacts(1)} to analytically continue $K^\alpha_t$ to $\Re\alpha>(1-n)/2$.

        Finally, for $\Re\alpha>(1-n)/2$, the expression in \eqref{lem:S2kerexp(ii)} as well as Lemma~\ref{lem:S2hypfunfacts}\eqref{lem:S2hypfunfacts(3)} indicates that $K^\alpha_t$ is a locally integrable function on $(0,\infty)\times(0,\infty)$. Thus the values on these surfaces can be assigned arbitrarily.
    \end{proof}

    Lemmas~\ref{lem:S2hypfunfacts}\eqref{lem:S2hypfunfacts(3)} and~\ref{lem:S2kerexp}\eqref{lem:S2kerexp(ii)}\eqref{lem:S2kerexp(iii)} immediately yield the following crucial estimates.
    \begin{lemma}\label{lem:S2kerest}
        Suppose $\alpha\in\bR$ with $\alpha>(1-n)/2$. If $|R-r|<t<R+r$,
        \begin{align}
            |K_t^\alpha(R,r)|\lesssim{}&\frac{(\sinh t)^{2-n}}{(\cosh t-1)^\alpha}(\sinh R\,\sinh r)^{\frac{1-n}2}(\cosh t-\cosh(R-r))^{\alpha+\frac{n-3}2}\label{eq:kerest(i)}\\
            {}&\times\begin{cases}
                \big(\frac{\cosh(R+r)-\cosh t}{\sinh R\,\sinh r}\big)^{\alpha+\frac{n-3}2},&\frac{1-n}2<\alpha<\frac{3-n}2,\\
                1+\log\frac{2\sinh R\,\sinh r}{\cosh(R+r)-\cosh t},&\alpha=\frac{3-n}2,\\
                1,&\alpha>\frac{3-n}2.\notag
            \end{cases}
        \end{align}
        If $t>R+r$,
        \begin{align}
            |K_t^\alpha(R,r)|\lesssim{}&\frac{(\sinh t)^{2-n}}{(\cosh t-1)^\alpha}(\cosh t-\cosh(R-r))^{\alpha-1}\label{eq:kerest(ii)}\\
            {}&\times\begin{cases}
                \big(\frac{\cosh t-\cosh(R+r)}{\cosh t-\cosh(R-r)}\big)^{\alpha+\frac{n-3}2}, & \frac{1-n}2<\alpha<\frac{3-n}2,\\
                1+\log\frac{\cosh t-\cosh(R-r)}{\cosh t-\cosh(R+r)}, & \alpha=\frac{3-n}2,\\
                1, & \alpha>\frac{3-n}2.\notag
            \end{cases}
        \end{align}
    \end{lemma}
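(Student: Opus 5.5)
The two estimates follow by inserting Lemma~\ref{lem:S2hypfunfacts}\eqref{lem:S2hypfunfacts(3)} into the explicit formulas of Lemma~\ref{lem:S2kerexp}\eqref{lem:S2kerexp(ii)}\eqref{lem:S2kerexp(iii)}. The prefactors are harmless constants (the Gamma factors are finite for real $\alpha>(1-n)/2$; note $1/\Gamma(\alpha)$ is entire, so it is bounded for $\alpha$ in the relevant range), so everything reduces to bounding the hypergeometric factor with the right argument $z\in[0,1)$ and identifying $1-z$.

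\emph{Case $|R-r|<t<R+r$.} Here $z=\frac{\cosh t-\cosh(R-r)}{2\sinh R\,\sinh r}$. Since $\cosh(R+r)-\cosh(R-r)=2\sinh R\sinh r$, we have $z\in(0,1)$ and
\begin{align*}
1-z=\frac{\cosh(R+r)-\cosh t}{2\sinh R\,\sinh r}.
\end{align*}
The parameters are $a=\frac{3-n}2$, $b=\frac{n-1}2$, $c=\alpha+\frac{n-1}2$, so $c-a-b=\alpha+\frac{n-3}2$. Lemma~\ref{lem:S2hypfunfacts}\eqref{lem:S2hypfunfacts(3)} for $\bF$ then gives:
\begin{itemize}
\item[] the power $(1-z)^{\alpha+\frac{n-3}2}$ when $\alpha<\frac{3-n}2$;
\item[] the factor $1+\log\frac1{1-z}$ when $\alpha=\frac{3-n}2$;
\item[] a bounded factor when $\alpha>\frac{3-n}2$.
\end{itemize}
Dropping the constant $2$ in $1-z$ yields \eqref{eq:kerest(i)}.

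\emph{Case $t>R+r$.} Here $z=\frac{2\sinh R\,\sinh r}{\cosh t-\cosh(R-r)}\in(0,1)$ and
\begin{align*}
1-z=\frac{\cosh t-\cosh(R+r)}{\cosh t-\cosh(R-r)}.
\end{align*}
The parameters are $a=1-\alpha$, $b=\frac{n-1}2$, $c=n-1$, so again $c-a-b=\alpha+\frac{n-3}2$. The trichotomy is at the same threshold $\alpha=\frac{3-n}2$, which gives \eqref{eq:kerest(ii)}.

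The one point to check is that Lemma~\ref{lem:S2hypfunfacts}\eqref{lem:S2hypfunfacts(3)} applies uniformly for $z$ near $1$. Its proof (the series bound $\sum_j(1+j)^{a+b-c-1}|z|^j$) only needs the coefficient asymptotics and not $\Re c>\Re b$.

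There is also a degenerate case. When $c$ is a nonpositive integer in the first case, $\bF$ is still defined by its entire continuation. Its series coefficients $\Gamma(a+j)\Gamma(b+j)/(\Gamma(a)\Gamma(b)\,j!\,\Gamma(c+j))$ still satisfy the same $(1+j)^{a+b-c-1}$ bound. The same holds when $a$ is a nonpositive integer, where the series terminates and the estimate is trivially $\lesssim1$, consistent with the claimed bounds. So no real obstacle remains beyond this bookkeeping.
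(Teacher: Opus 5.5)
Your proposal is correct and is essentially the paper's argument: the paper says only that the lemma follows immediately by inserting Lemma~\ref{lem:S2hypfunfacts}\eqref{lem:S2hypfunfacts(3)} into the explicit kernels of Lemma~\ref{lem:S2kerexp}\eqref{lem:S2kerexp(ii)}\eqref{lem:S2kerexp(iii)}, and your identification of $1-z$ and of $c-a-b=\alpha+\frac{n-3}2$ in both regimes, together with the check that the series bound does not need $c>b$, supplies exactly the omitted bookkeeping. One minor point: the case where $c=\alpha+\frac{n-1}2$ is a nonpositive integer never arises, since $\alpha>\frac{1-n}2$ forces $c>0$.
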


    We conclude this section by showing the following result, which holds for general $C^\infty_c(\bH^n)$ functions.
    \begin{lemma}\label{lem:S2raising}
        For $\alpha>(1-n)/2$, $\delta>0$ and $f\in C^\infty_c(\bH^n)$,
        \begin{align*}
            \fm^{\alpha+\delta}(f)(x)\lesssim\fm^\alpha(f)(x).
        \end{align*}
    \end{lemma}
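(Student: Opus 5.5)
The plan is to realize $M^\alpha_t$ as a Riemann--Liouville fractional integral of the spherical means in the variable $u=\cosh s$. Then I will use the semigroup law of fractional integration to express $M^{\alpha+\delta}_t$ as a positive average of $M^\alpha_s$ over $0<s<t$.

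\textbf{Step 1 (fractional-integral form).} Fix $x$ and set
\begin{align*}
A_s f(x):=\int_{\bS^{n-1}}f(\exp_x(s\omega))\,\d\omega,\qquad g(u):=A_sf(x)\,(\sinh s)^{n-2},\quad u=\cosh s.
\end{align*}
For $\Re\alpha>0$, geodesic polar coordinates at $x$ and $\d u=\sinh s\,\d s$ turn \eqref{eq:S1sphavgdef} into
\begin{align*}
M^\alpha_t(f)(x)=\frac{(\sinh t)^{2-n}}{(T-1)^\alpha}\,I^\alpha g(T),\qquad I^\alpha g(T):=\frac1{\Gamma(\alpha)}\int_1^T(T-u)^{\alpha-1}g(u)\,\d u,\quad T=\cosh t.
\end{align*}
Near $u=1$ we have $g(u)=(u-1)^{\frac{n-2}2}\,\phi(u)$ with $\phi$ smooth up to $u=1$, since $f\in C^\infty_c$. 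Hence $I^\alpha g(T)$ extends analytically to $\Re\alpha>-\frac n2\supset\{\Re\alpha>\frac{1-n}2\}$. This extension agrees with the analytic continuation defining $M^\alpha_t$, by uniqueness of analytic continuation.

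\textbf{Step 2 (semigroup identity).} For $\Re\alpha>0$ and $\delta>0$, Fubini and the Beta integral give
\begin{align*}
I^{\alpha+\delta}g(T)=\frac1{\Gamma(\delta)}\int_1^T(T-v)^{\delta-1}I^\alpha g(v)\,\d v.
\end{align*}
Both sides are analytic in $\alpha$ on $\Re\alpha>\frac{1-n}2$. For the right side this needs $I^\alpha g(v)=O\big((v-1)^{\Re\alpha+\frac{n-2}2}\big)$ locally uniformly in $\alpha$, which is integrable near $v=1$ because $\Re\alpha+\frac{n-2}2>-\frac12$. So the identity persists for all real $\alpha>\frac{1-n}2$. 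I expect this continuation step, namely the uniform control of $I^\alpha g$ near $v=1$ for nonpositive $\alpha$, to be the main technical point. I would handle it by writing $g=(u-1)^{\frac{n-2}2}\phi$ and expanding $\phi$ to finite Taylor order plus a remainder, so that each piece is an explicit Beta-type function.

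\textbf{Step 3 (the estimate).} Write $v=\cosh s$. By Step 1, $I^\alpha g(v)=(v-1)^\alpha(\sinh s)^{n-2}M^\alpha_s(f)(x)$, and $(\sinh s)^{n-2}=(v-1)^{\frac{n-2}2}(v+1)^{\frac{n-2}2}$. Bounding $|M^\alpha_s(f)(x)|\le\fm^\alpha(f)(x)$ gives
\begin{align*}
|M^{\alpha+\delta}_t(f)(x)|\le\frac{(\sinh t)^{2-n}}{\Gamma(\delta)(T-1)^{\alpha+\delta}}\,\fm^\alpha(f)(x)\int_1^T(T-v)^{\delta-1}(v-1)^{\alpha+\frac{n-2}2}(v+1)^{\frac{n-2}2}\,\d v.
\end{align*}
Since $n\ge2$, we may bound $(v+1)^{\frac{n-2}2}\le(T+1)^{\frac{n-2}2}$. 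The remaining integral is then $B\big(\alpha+\frac n2,\delta\big)(T-1)^{\alpha+\delta+\frac{n-2}2}$, which is finite because $\alpha+\frac n2>\frac12$. Using $(T-1)^{\frac{n-2}2}(T+1)^{\frac{n-2}2}=(\sinh t)^{n-2}$, every power of $t$ cancels, and we obtain $|M^{\alpha+\delta}_t(f)(x)|\le C_{\alpha,\delta,n}\,\fm^\alpha(f)(x)$ uniformly in $t$. Taking the supremum over $t>0$ proves Lemma~\ref{lem:S2raising}.
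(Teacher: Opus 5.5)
Your proof is correct and is essentially the paper's argument. Under the substitution $v-1=u(\cosh t-1)$, i.e.\ $v=\cosh\tau(t,u)$ with $(\sinh\frac{\tau}{2})^2=u(\sinh\frac t2)^2$, your Step~2 identity becomes exactly the subordination formula the paper takes from El Kohen's Lemma~1, and your bound $(v+1)^{\frac{n-2}2}\le(\cosh t+1)^{\frac{n-2}2}$ is the paper's $\sinh\tau(t,u)\lesssim u^{1/2}\sinh t$, so both proofs end with the same Beta integral. The only difference is that you derive the formula yourself, including its analytic continuation to $(1-n)/2<\alpha\le 0$, via the Riemann--Liouville semigroup law in $\cosh s$, whereas the paper cites it.
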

    \begin{proof}
        In view of the proof of \cite[Lemma 1]{ElK80spherical},
        \begin{align*}
            M^{\alpha+\delta}_t(f)(x)=\frac1{\Gamma(\delta)}\int_0^1 M^\alpha_{\tau(t,u)}(f)(x)\left(\frac{\sinh\tau(t,u)}{\sinh t}\right)^{n-2}u^\alpha(1-u)^{\delta-1}\d u,
        \end{align*}
        where the function $\tau(t,u)\geq0$ is determined by
        \begin{align*}
            \left(\sinh\frac{\tau(t,u)}2\right)^2=u\left(\sinh\frac t2\right)^2.
        \end{align*}
        Note that $\sinh \tau(t,u)\lesssim u^{1/2}\sinh t$, whence
        \begin{align*}
            \fm^{\alpha+\delta}(f)(x)\lesssim \fm^\alpha(f)(x)\int_0^1u^{\alpha+\frac{n-2}2}(1-u)^{\delta-1}\d u\lesssim \fm^\alpha(f)(x),
        \end{align*}
        as desired.
    \end{proof}

    \section{Proof of Theorem~\ref{thm:main}}\label{sec:proof}

    In this section, we decompose the maximal operator $\fm^\alpha$ into the local part
    \begin{align*}
        \fm_0^\alpha (f)(R)=\sup_{0<t\leq 1}|M_t^\alpha (f)(R)|
    \end{align*}
    as well as the global parts
    \begin{align*}
        \fm_T^\alpha (f)(R)=\sup_{T\leq t\leq T+1}|M_t^\alpha (f)(R)|,
        \qquad T=1,2,\ldots.
    \end{align*}
    We then separately consider their $L^p_{\rad}$-boundedness.

    \subsection{The local part \texorpdfstring{$\fm^\alpha_0$}{mα0}}

    The aim of this subsection is to show the following result.
    \begin{proposition}\label{prop:SS3.1local}
        Suppose $1<p<\infty$, $\alpha<1/p$ and
        \begin{equation}\label{eq:SS3.1threeweak}
            \alpha\geq
            \begin{cases}
                1-n+\frac np, & 1<p\leq 2,\\
                \frac1p-\frac{n-1}2, & 2<p\leq \frac{2n}{n-1},\\
                \frac{1-n}p, & p>\frac{2n}{n-1}.
            \end{cases}
        \end{equation}
        Then $\fm^\alpha_0$ is bounded from $L^{p,1}_{\rad}(\bH^n)$ to $L^{p,\infty}_{\rad}(\bH^n)$.

        Furthermore, if $p>2n/(n-1)$ and $\alpha=(1-n)/p$, then $\fm^\alpha_0$ is bounded on $L^p_{\rad}(\bH^n)$.
    \end{proposition}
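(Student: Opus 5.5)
Since the radius satisfies $0<t\le 1$, the plan is to compare the local operator $\fm^\alpha_0$ pointwise with its Euclidean counterpart acting on radial functions, and then to invoke the sharp radial results of Ciaurri et al.\ \cite[Corollaries~1.5 and~1.6(b)]{NRS23radial}. Those results cover exactly the range \eqref{eq:SS3.1threeweak}. Let $h$ be the radial profile. In view of \eqref{eq:S2radialintegral}, $\|f\|_{L^{p,q}_{\rad}(\bH^n)}$ is the Lorentz norm of $h$ with respect to $(\sinh r)^{n-1}\d r$. This measure is comparable to $r^{n-1}\d r$ on $(0,2]$ and grows exponentially afterwards.

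I would split into two regions.

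\emph{Region $R\le 2$.} Here $t\le1$ forces $r\le 3$. On compact sets $\sinh\sim$ identity and $\cosh a-\cosh b\sim (a^2-b^2)/2$ up to harmless factors. For instance, $\cosh t-\cosh(R-r)\approx \tfrac12(t^2-(R-r)^2)$, and the argument of $\bF$ is $\approx (t^2-(R-r)^2)/(4Rr)$. Also $(\sinh t)^{2-n}(\cosh t-1)^{-\alpha}\sim t^{2-n-2\alpha}$. Substituting these into Lemma~\ref{lem:S2kerest} gives
\begin{align*}
|K^\alpha_t(R,r)|\lesssim \mathcal K^\alpha_t(R,r),
\end{align*}
where $\mathcal K^\alpha_t$ is the positive majorant of the Euclidean radial kernel of $\fM^\alpha$. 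That majorant is obtained from the identical hypergeometric computation in $\bR^n$ and has the same three cases in $\alpha$.

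The Euclidean proofs in \cite{CNR17radial,CNR20radial,NRS23radial} in fact bound the maximal operator with this positive majorant kernel; the paper's outline says exactly this. Hence $\sup_{t\le1}\int\mathcal K^\alpha_t(R,r)|h(r)|r^{n-1}\d r$ obeys the Euclidean restricted weak-type bound, and the strong-type bound at $p>2n/(n-1)$, $\alpha=(1-n)/p$. Restricting to $R\le2$ and $r\le3$ lets us switch measures freely.

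\emph{Region $R>2$.} Here $|R-r|<t\le1$, so $r\in(R-1,R+1)$ and $\sinh R\sinh r\sim e^{2R}$. The argument of $\bF$ is then $\lesssim e^{-2R}$, far from $1$, so the hypergeometric factor is $O(1)$. The kernel becomes
\begin{align*}
|K^\alpha_t(R,r)|\lesssim t^{2-n-2\alpha}e^{-(n-1)R}\big(t^2-(R-r)^2\big)_+^{\alpha+\frac{n-3}2}.
\end{align*}
Factor $e^{-(n-1)R}$ against the measure $(\sinh r)^{n-1}\sim e^{(n-1)R}$. What remains is a one-dimensional maximal operator $\sup_{t\le1}\int t^{2-n-2\alpha}(t^2-s^2)_+^{\alpha+\frac{n-3}2}|h(R-s)|\,\d s$ in the variable $R$. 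Its kernel, normalized in $t$, behaves like $t^{-1}(1-|s|/t)_+^{\alpha+\frac{n-3}2}$ near $|s|=t$ after scaling.

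When $\alpha+\frac{n-3}{2}\ge0$ this is dominated by the Hardy--Littlewood maximal function. Otherwise the singularity at $|s|=t$ is of order $\alpha+\frac{n-3}2>-1$, and it is controlled by a fractional-integration/maximal estimate: $\sup_t t^{-1}\int_{|s|<t}(1-|s|/t)^{-\gamma}|h|$ is of restricted weak type $(p,p)$ for $\gamma\le 1/p$, and strong type for $\gamma<1/p$. Here $\gamma=\frac{3-n}2-\alpha$, and the hypothesis $\alpha<1/p$ together with $\alpha>(1-n)/2$ should make the condition hold. I would check this case carefully, perhaps reducing it to the $n=1$ Euclidean case of \cite{NRS23radial} or to a direct Lorentz-space H\"older argument. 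Since $e^{(n-1)R}$ varies by a bounded factor over $|R-r|\le1$, the one-dimensional bound transfers to $L^{p,1}\to L^{p,\infty}$ and $L^p\to L^p$ with the weight $(\sinh R)^{n-1}$, by localizing to unit intervals.

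\emph{Main obstacle.} I expect the main difficulty to be the first region: justifying rigorously that the cited Euclidean radial results really apply to the positive majorant kernel rather than to the signed operator, and carefully matching the three cases of Lemma~\ref{lem:S2kerest} (including the logarithmic case $\alpha=\frac{3-n}2$ and the near-boundary behaviour $t\approx R+r$ when $R,r\lesssim t$) with the corresponding Euclidean majorants. If this does not go through directly, I would instead re-run their dyadic decomposition in $t/R$ and $|R-r|/t$ using only the bounds of Lemma~\ref{lem:S2kerest}.
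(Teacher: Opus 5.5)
Your overall route matches the paper's. On $R\le 2$ you majorize $K^\alpha_t$ by the Euclidean kernel (the paper's $E^\alpha_t$) and invoke the radial Euclidean theory. On $R>2$ you reduce, exactly as the paper does, to the one-dimensional operator
\[
h\mapsto\sup_{0<t\le 1}\int_{-1}^1(1-u^2)^{-\gamma}\,|h(R-tu)|\,\d u,\qquad \gamma=\frac{3-n}2-\alpha,
\]
localized to unit intervals. The gap is in how you treat this operator.

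\textbf{The exponent in your one-dimensional lemma is wrong.} You claim restricted weak type for $\gamma\le 1/p$. The correct condition is $\gamma\le 1/p'$, i.e.\ $(1-|u|)^{-\gamma}\in L^{p',\infty}$, with strong type for $\gamma<1/p'$. To see this, take $h=\1_{[0,\varepsilon]}$, $R\in[\frac14,1]$ and $t=R$. The average is $\gtrsim\varepsilon^{1-\gamma}$ on a set of measure $\sim 1$, while $\|h\|_{L^{p,1}}\sim\varepsilon^{1/p}$, which forces $\gamma\le 1-\frac1p$. So your lemma is false for $p<2$. For $p>2$ it is too weak to finish:
\begin{itemize}
\item at the endpoint $\alpha=\frac1p-\frac{n-1}2$ with $2<p\le\frac{2n}{n-1}$, one has $\gamma=\frac1{p'}>\frac1p$;
\item for $n=2$, $p>4$, $\alpha=-\frac1p$, one has $\gamma=\frac12+\frac1p>\frac1p$.
\end{itemize}
Hence neither the restricted weak-type nor the strong-type conclusion would follow for $R>2$.

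\textbf{You also use the wrong hypothesis to verify the condition.} The assumptions ``$\alpha<1/p$ together with $\alpha>(1-n)/2$'' only give $\gamma<1$ and a lower bound on $\gamma$. In fact $\1_{(2,\infty)}\fm^\alpha_0$ genuinely fails to be of restricted weak type $(p,p)$ when $\alpha<\frac1p-\frac{n-1}2$. Shift the test above: take $f=\1_{[j,j+\varepsilon]}$, $R\in[j+\frac14,j+1]$, $t=R-j$, and use that the hypergeometric factor in Lemma~\ref{lem:S2kerexp} is positive for small argument. So this is precisely where the lower bound \eqref{eq:SS3.1threeweak} must enter; the upper bound $\alpha<1/p$ plays no role here.

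\textbf{The repair.} The condition $\gamma\le1/p'$ is equivalent to $\alpha\ge\frac1p-\frac{n-1}2$, and \eqref{eq:SS3.1threeweak} implies this in all three ranges:
\begin{itemize}
\item for $p\le2$: $1-n+\frac np-\bigl(\frac1p-\frac{n-1}2\bigr)=(n-1)\bigl(\frac1p-\frac12\bigr)\ge0$;
\item for $2<p\le\frac{2n}{n-1}$: the bound is assumed directly;
\item for $p>\frac{2n}{n-1}$: $\frac{1-n}p-\bigl(\frac1p-\frac{n-1}2\bigr)=\frac{n-1}2-\frac np>0$.
\end{itemize}
The strict inequality in the last case gives $\gamma<1/p'$, hence the strong-type assertion. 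You can quote the one-dimensional bounds from \cite[Theorems~A and~B(b)]{NRS23radial}, as the paper does. Alternatively, prove them directly:
\begin{itemize}
\item split $(1-|u|)^{-\gamma}$ dyadically near $|u|=1$;
\item bound the $l$-th thin-shell maximal operator by $2^{-l/p'}$ on $L^p$, interpolating the weak $L^1$ bound with the $L^\infty$ bound $2^{-l}$;
\item use Bourgain's summation trick at $\gamma=1/p'$.
\end{itemize}
With this correction, plus the routine Lorentz-space summation over unit intervals that the paper writes out, your argument coincides with the paper's.
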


    To prove Proposition~\ref{prop:SS3.1local}, we define the Euclidean radial kernel $E^\alpha_t(R,r)$ as follows:
    \begin{asparaenum}[(i)]
        \item If $t<|R-r|$, then $E^\alpha_t(R,r)=0$.
        \item If $|R-r|<t<R+r$, then
        \begin{align*}
            E^\alpha_t(R,r)={}&\frac{(Rr)^{\frac{1-n}2}}{t^{n-2+2\alpha}}(t^2-(R-r)^2)^{\alpha+\frac{n-3}2}\times
            \begin{cases}
                \big(\frac{(R+r)^2-t^2)}{Rr}\big)^{\alpha+\frac{n-3}2}, & \frac{1-n}2<\alpha<\frac{3-n}2,\\
                1+\log\!\big(\frac{4Rr}{(R+r)^2-t^2}\big), & \alpha=\frac{3-n}2,\\
                1, & \alpha>\frac{3-n}2.\\
            \end{cases}
        \end{align*}
        \item If $t>R+r$, then
        \begin{align*}
            E^\alpha_t(R,r)=\frac1{t^{n-2+2\alpha}}(t^2-(R-r)^2)^{\alpha-1}\times
            \begin{cases}
                \big(\frac{t^2-(R+r)^2}{t^2-(R-r)^2}\big)^{\alpha+\frac{n-3}2}, & \frac{1-n}2<\alpha<\frac{3-n}2,\\
                1+\log\!\big(\frac{t^2-(R-r)^2}{t^2-(R+r)^2}\big), & \alpha=\frac{3-n}2,\\
                1, & \alpha>\frac{3-n}2.
            \end{cases}
        \end{align*}
    \end{asparaenum}
    We further define the associated maximal function
    \begin{align*}
        \fe^\alpha(f)(R):=\sup_{t>0}\int_0^\infty E^\alpha_t(R,r)|f(r)|\,r^{n-1}\d r.
    \end{align*}
    The following result concerning the $L^p_{\rad}$-boundedness of $\fe^\alpha$ serves as a key ingredient in the proof of Proposition~\ref{prop:SS3.1local}.
    \begin{lemma}\label{lem:SS3.1Eumaxest}
        Let $1<p<\infty$, $(1-n)/2<\alpha<1/p$. If \eqref{eq:SS3.1threeweak} holds, then $\fe^\alpha$ is bounded from $L^{p,1}(\bR_+,r^{n-1}\d r)$ to $L^{p,\infty}(\bR_+,r^{n-1}\d r)$. Furthermore, if $p>2n/(n-1)$ and $\alpha=(1-n)/p$, then
        $\fe^\alpha$ is bounded on $L^p(\bR_+,r^{n-1}\d r)$.
    \end{lemma}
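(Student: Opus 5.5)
The plan is to reduce Lemma~\ref{lem:SS3.1Eumaxest} to the Euclidean radial results of Ciaurri et al.\ \cite{CNR17radial,CNR20radial,NRS23radial}. We first identify $E^\alpha_t(R,r)$, up to multiplicative constants, with the positive majorant of the radial kernel of the Euclidean operator $\fM^\alpha$. Indeed, the Euclidean analogue of Lemma~\ref{lem:S2kerexp} has the same structure, with $\cosh$ replaced by its quadratic Taylor expansion. In the range $|R-r|<t<R+r$, one has $2\sinh R\,\sinh r\mapsto 2Rr$ and $\cosh t-\cosh(R-r)\mapsto (t^2-(R-r)^2)/2$. Applying Lemma~\ref{lem:S2hypfunfacts}\eqref{lem:S2hypfunfacts(3)} to that hypergeometric expression produces exactly the three cases defining $E^\alpha_t$. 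Note that $1-\frac{t^2-(R-r)^2}{4Rr}=\frac{(R+r)^2-t^2}{4Rr}$. Hence $\fe^\alpha$ is precisely the maximal operator with nonnegative kernel that \cite{NRS23radial} uses to dominate $\fM^\alpha$ on radial functions.

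The argument in \cite{NRS23radial} in fact proves the restricted weak type $(p,p)$ bound, and the strong $L^p$ bound at $\alpha=(1-n)/p$ for $p>2n/(n-1)$, for this \emph{positive} majorant. So I would verify that their estimates are estimates for the absolute kernel rather than for the oscillatory one, and then quote them.

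If a self-contained argument is preferred, I would organize the proof as follows. First, since $\alpha<1/p$ we only need the thresholds in \eqref{eq:SS3.1threeweak}. By the scaling $E^\alpha_t(\lambda R,\lambda r)=\lambda^{-n-2\alpha+2\alpha}\cdots$ (homogeneity of degree $-n$ after including $r^{n-1}\d r$), the problem is dilation invariant. Next, split the $r$-integral into three regions: $r\le R/2$, $r\ge 2R$, and $R/2<r<2R$.
\begin{asparaenum}[(a)]
\item In the off-diagonal regions the kernel is essentially $t^{-n-2\alpha+2}\max(R,r)^{2\alpha-2}\cdots$. After taking the supremum in $t$, it is dominated by a Hardy-type operator $R^{-n}\int_0^R|f|r^{n-1}\d r$ plus its adjoint, weighted by powers $(r/R)^{\gamma}$. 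These are bounded on $L^p(r^{n-1}\d r)$ exactly when the exponents match the lines $\alpha\ge 1-n+n/p$ (from $r\ll R$) and $\alpha\ge(1-n)/p$ (from $r\gg R$). At the equality cases these become restricted weak type via Lorentz-space Hardy inequalities; the $(1-n)/p$ endpoint is even strong type because that piece is a power weight that remains bounded.
\item In the local region $R\sim r$, set $r=R(1+s)$. The kernel behaves like $R^{-1}|s|^{?}(t^2-(R-r)^2)^{\alpha+\frac{n-3}2}$ times the boundary factor near $t=R+r$. The supremum over $t$ near $|R-r|$ yields a one-dimensional fractional-integral-type maximal operator $\sup_t t^{-\beta}\int (t-|R-r|)_+^{\alpha+\frac{n-3}2}|f|$. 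This is of restricted weak type $(p,p)$ on $\bR$ precisely when $\alpha+\frac{n-1}2\ge\frac1p$, i.e.\ $\alpha\ge\frac1p-\frac{n-1}2$. To see this, test against characteristic functions of sets $E$: one gets $\int_E (t-|x|)^{\alpha+\frac{n-3}2}\lesssim |E|^{\alpha+\frac{n-1}2}$. The same argument, with the factor $((R+r)^2-t^2)^{\alpha+\frac{n-3}2}$, handles $t$ near $R+r$, which matters only for $R\sim r$ with both small relative to $t$.
\end{asparaenum}

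The main obstacle is the endpoint region (b) at $\alpha=\frac1p-\frac{n-1}2$. There the local operator is not strong type, and one needs the restricted weak type estimate uniformly in $t$. I would try the Bourgain-style interpolation of \cite{Bou85spherical}: decompose $\{t-|R-r|\sim 2^{-j}R\}$, prove an $L^1\to L^{1,\infty}$ bound with growth $2^{j\epsilon_0}$ and an $L^\infty$ (or $L^{p_1}$) bound with decay $2^{-j\epsilon_1}$, and sum using the restricted-weak-type interpolation lemma. The logarithmic case $\alpha=(3-n)/2$ is handled by absorbing the logarithm into an arbitrarily small power loss, since that endpoint never coincides with a sharp threshold except for $n=2$. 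For $n=2$ and $\alpha=\frac12$ it would need the same dyadic summation. Finally, the case $1<p\le 2$, where $\alpha=1-n+n/p$ is the binding constraint, comes from (a), and the constraints from (b) are then automatically satisfied.
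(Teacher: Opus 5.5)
Your primary route coincides with the paper's. The paper proves this lemma only by citing the sufficiency proofs of \cite[Theorems~A and~B(b)]{NRS23radial} (Sections~3--4 there), and your matching of $E^\alpha_t$ with the majorant of the Euclidean radial kernel via Lemma~\ref{lem:S2hypfunfacts}\eqref{lem:S2hypfunfacts(3)} is exactly the check that makes that citation legitimate.

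One correction to your optional self-contained sketch: the logarithmic case $\alpha=\frac{3-n}2$ does meet a sharp line for every $n$, namely $\alpha=1-n+\frac np$ at $p=\frac{2n}{n+1}$ (e.g.\ $n=3$, $\alpha=0$, $p=\frac32$), and also $\alpha=\frac{1-n}p$ at $p=\frac{2(n-1)}{n-3}$ when $n\geq4$. It is harmless there not because the cases never coincide, but because the logarithm is only large near $t=R+r$. That is a local singularity, where the condition $\alpha>\frac1p-\frac{n-1}2$ holds strictly (for $\alpha=\frac{3-n}2$ it is equivalent to $p>1$).
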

    \begin{proof}
        This follows from the proofs of sufficiency part of \cite[Theorems~A and~B(b)]{NRS23radial}, see \cite[Sections 3--4]{NRS23radial}.
    \end{proof}

    Now we present the
    \begin{proof}[Proof of Proposition~\ref{prop:SS3.1local}]
        Fix a radial function $f$. We write
        \begin{align}
            \fm^\alpha_0(f)={}&\1_{(0,2]}\fm^\alpha_0(f)+\sum_{j=2}^\infty\1_{(j,j+1]}\fm^\alpha_0(f)\label{eq:SS3.1decom}\\
            ={}&\underbrace{\1_{(0,2]}\fm^\alpha_0(f\1_{(0,3]})}_{\bI^\alpha_1(f)}+\sum_{j=2}^\infty\underbrace{\1_{(j,j+1]}\fm^\alpha_0(f\1_{(j-1,j+2]})}_{\bI^\alpha_j(f)},\notag
        \end{align}
        where the second equality follows from Lemma~\ref{lem:S2kerexp}\eqref{lem:S2kerexp(i)}.
        \smallskip

        We begin by estimating $\bI^\alpha_1(f)$. Since
        \begin{align*}
            \cosh a-\cosh b=\frac{\e^a}2(1-\e^{-a+b})(1-\e^{-a-b})
        \end{align*}
        Lemma~\ref{lem:S2kerest} yields that for $0<t\leq 1$, $R\leq 2$ and $r\leq 3$, we have
        \begin{equation*}
            |K_t^\alpha(R,r)|\lesssim E^\alpha_t(R,r).
        \end{equation*}
        Therefore, in view of Lemma~\ref{lem:SS3.1Eumaxest}, $\bI^\alpha_1$ satisfies the desired boundedness properties.
        \smallskip

        Fix $j=2,3,\dots$. It remains to estimate $\bI^\alpha_j(f)$. For $0<t\leq 1$, $R\in (j,j+1]$ and $r\in (j-1,j+2]$, we have
        \begin{align*}
            \cosh t-\cosh(R-r)\sim t^2-(R-r)^2,\qquad \cosh(R+r)-\cosh t\sim\sinh R\,\sinh r.
        \end{align*}
        Thus Lemma~\ref{lem:S2kerest} yields
        \begin{align*}
            |K_t^\alpha(R,r)|(\sinh r)^{n-1}\lesssim t^{2-n-2\alpha}\big(t^2-(R-r)^2\big)^{\alpha+\frac{n-3}2},
        \end{align*}
        whence
        \begin{align*}
            \fm^\alpha_0(f)(R)&\lesssim \sup_{0<t\leq 1}t^{2-n-2\alpha}\int_{-t}^t
            (t^2-r^2)^{\alpha+\frac{n-3}2}|f(R-r)|\,\d r\\
            &=\sup_{0<t\leq 1}\int_{-1}^1(1-r^2)^{\alpha+\frac{n-3}2}|f(R-tr)|\,\d r.
        \end{align*}
        It follows from \cite[Theorems~A and~B(b)]{NRS23radial} that $\bI^\alpha_j$ is bounded from $L^p((j-1,j+2],\d r)$ to $L^p((j,j+1],\d R)$ when $\alpha>1/p-(n-1)/2$, and from $L^{p,1}((j-1,j+2],\d r)$ to $L^{p,\infty}((j,j+1],\d R)$. These estimates in turn imply
        \begin{align*}
            \|\bI^\alpha_j(f)\|_{L^p((j,j+1],(\sinh R)^{n-1}\d R)}\lesssim \|f\|_{L^p((j-1,j+2],(\sinh R)^{n-1}\d r)}\qquad\text{for }\alpha>\frac1p-\frac{n-1}2,
        \end{align*}
        and
        \begin{align*}
            \|\bI^\alpha_j(f)\|_{L^{p,\infty}((j,j+1],(\sinh R)^{n-1}\d R)}\lesssim \|f\|_{L^{p,1}((j-1,j+2],(\sinh r)^{n-1}\d r)}\qquad\text{for }\alpha\geq \frac1p-\frac{n-1}2.
        \end{align*}
        By the disjointness of $(j,j+1]$ and finite overlapping of $(j-1,j+2]$, we sum over $j=2,3,\dots$ to obtain
        \begin{align}
            \left\|\sum_{j=2}^\infty\bI^\alpha_j(f)\right\|_{L^p_{\rad}(\bH^n)}^p={}&\sum_{j=2}^\infty\|\bI^\alpha_j(f)\|_{L^p((j,j+1],(\sinh R)^{n-1}\d R)}^p\label{eq:SS3.1strong}\\
            \lesssim{}&\sum_{j=2}^\infty\|f\|_{L^p((j-1,j+2],(\sinh r)^{n-1}\d r)}^p\lesssim \|f\|_{L^p_{\rad}(\bH^n)}^p\qquad\text{for }\alpha>\frac1p-\frac{n-1}2,\notag
        \end{align}
        while in view of \cite[Proposition~1.4.9]{Gra14GTM249}
        \begin{align}
            {}&\left\|\sum_{j=2}^\infty\bI^\alpha_j(f)\right\|_{L^{p,\infty}_{\rad}(\bH^n)}^p\label{eq:SS3.1resweak}\\
            \leq{}&\sum_{j=2}^\infty\left\|\bI^\alpha_j(f)\right\|_{L^{p,\infty}((j-1,j],(\sinh R)^{n-1}\d R)}^p\notag\\
            \lesssim{}&\sum_{j=2}^\infty\|f\|_{L^{p,1}((j-1,j+2],(\sinh r)^{n-1}\d r)}^p\notag\\
            \sim{}&\sum_{j=2}^\infty\left(\int_0^\infty\left(\int_{j-1}^{j+2}\1_{\{|f(r)|>s\}}(\sinh r)^{n-1}\d r\right)^{\frac1p}\d s\right)^p\notag\\
            \leq{}&\left(\int_0^\infty\left(\sum_{j=2}^\infty\int_{j-1}^{j+2}\1_{\{|f(r)|>s\}}(\sinh r)^{n-1}\d r\right)^{\frac1p}\d s\right)^p\notag\\
            \lesssim{}&\left(\int_0^\infty\left(\int_0^\infty\1_{\{|f(r)|>s\}}(\sinh r)^{n-1}\d r\right)^{\frac1p}\d s\right)^p\sim \|f\|_{L^{p,1}_{\rad}(\bH^n)}^p\qquad\text{for }\alpha\geq\frac1p-\frac{n-1}2.\notag
        \end{align}
        \smallskip

        According to \eqref{eq:SS3.1decom}, these estimates yield the desired boundedness of $\fm^\alpha_0$.
    \end{proof}

    \subsection{The global parts \texorpdfstring{$\fm^\alpha_T$}{mαT}
        (\texorpdfstring{$T=1,2,\dots$}{T=1,2,...})}

    The aim of this subsection is to show the following result.
    \begin{proposition}\label{prop:SS3.2global}
        Suppose $1<p<\infty$, $T=1,2,\ldots$, $\alpha<1/p$ and \eqref{eq:SS3.1threeweak} holds. Set
        \begin{align*}
            c_p=(n-1)\min\!\left\{\frac1p,\frac1{p'}\right\}>0.
        \end{align*}
        Then
        \begin{align*}
            \|\fm^\alpha_T(f)\|_{L^{p,\infty}_{\rad}(\bH^n)}\lesssim T\,\e^{-c_pT}\|f\|_{L^{p,1}_{\rad}(\bH^n)}.
        \end{align*}

        Furthermore, if $p>2n/(n-1)$ and $\alpha=(1-n)/p$, then the above estimate also holds with $L^p_{\rad}$ in place of $L^{p,\infty}_{\rad}$ and $L^{p,1}_{\rad}$.
    \end{proposition}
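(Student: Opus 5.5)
For $T\le t\le T+1$ with $T\ge1$, the kernel $K^\alpha_t(R,r)$ lives on the region $r\ge t-R$, i.e.\ $R+r\ge t$ is not required. Both regimes of Lemma~\ref{lem:S2kerest} occur. I plan to split according to the relative sizes of $R$, $r$ and $t$ and bound each piece by an operator whose norm decays like $\e^{-c_pT}$.

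\textbf{Decomposition.} I split $K^\alpha_t=K_{\mathrm{in}}+K_{\mathrm{out}}$, where $K_{\mathrm{out}}$ is supported on $t>R+r$ and $K_{\mathrm{in}}$ on $|R-r|<t<R+r$.

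\textbf{The outer piece $t>R+r$.} Here $R,r<T+1$. I use $\cosh t-\cosh(R\pm r)\sim \e^{t}(1-\e^{-(t-R\mp r)})$ and \eqref{eq:kerest(ii)}, together with $(\sinh t)^{2-n}(\cosh t-1)^{-\alpha}\sim \e^{-(n-2+\alpha)t}$. Away from the singular set $t-R-r\lesssim 1$, this gives $|K|\lesssim \e^{-(n-1)t}$. Near the edge $t\approx R+r$, the singular factor $(t-R-r)^{\alpha+\frac{n-3}2}$ appears and is integrable since $\alpha>(1-n)/2$. After taking the sup over $t$ in a unit interval, the near-edge part is a one-dimensional Euclidean local maximal operator of the type handled in Proposition~\ref{prop:SS3.1local}; this is where the hypothesis \eqref{eq:SS3.1threeweak} and $\alpha<1/p$ enter. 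Its effective kernel is of size $\e^{-(n-1)T}$ on $r\approx T-R$. The remaining smooth part is bounded by $\e^{-(n-1)T}\int_{r<T+1-R}|f|(\sinh r)^{n-1}\d r$ on $R\le T+1$. The $L^{p}$ norm of this Hardy-type operator against the measures $(\sinh\cdot)^{n-1}$ is computed via Hölder. With $f$ tested on unit shells, it produces $\e^{-(n-1)T}\e^{(n-1)(R+r)\cdot(\ldots)}$. Optimizing gives at most $T\e^{-(n-1)T\min(1/p,1/p')}$, with the factor $T$ coming from summing the $\sim T$ unit shells in $R$.

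\textbf{The inner piece $|R-r|<t<R+r$.} By \eqref{eq:kerest(i)}, with $\sinh R\sinh r\sim \e^{R+r}$ (the case $\min(R,r)\lesssim1$ forces $\max(R,r)\approx t$ and is handled like the outer piece), we get
\begin{align*}
|K|(\sinh r)^{n-1}\lesssim \e^{-(n-2+2\alpha)t}\e^{\frac{n-1}2(r-R)}\big(\cosh t-\cosh(R-r)\big)^{\alpha+\frac{n-3}2}\times(\text{edge factor}).
\end{align*}
Writing $\cosh t-\cosh(R-r)\sim \e^{t}(1-\e^{|R-r|-t})$ reduces the main part to $\e^{-\frac{n-1}2t}\e^{\frac{n-1}2(r-R)}$ times a factor that is integrable near $|R-r|=t$. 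Near $|R-r|\approx t$, i.e.\ $r\approx R\pm T$, the singularity is again controlled by the Euclidean one-dimensional maximal bound from Proposition~\ref{prop:SS3.1local} on unit intervals. Passing to $L^p((\sinh R)^{n-1}\d R)$ amounts to conjugating by $\e^{(n-1)R/p}$. The piece $r\approx R+T$ then carries the weight $\e^{-\frac{n-1}2T}\e^{\frac{n-1}2T}\e^{-(n-1)T/p}=\e^{-(n-1)T/p}$. The piece $r\approx R-T$ carries $\e^{-(n-1)T}\e^{(n-1)T/p}=\e^{-(n-1)T/p'}$. The bulk region $|R-r|<T-1$ is a convolution-type operator in $R-r$ whose kernel, after conjugation, is $\lesssim \e^{-\frac{n-1}2T}\e^{(n-1)(\frac12-\frac1p)(r-R)}$ over $|r-R|<T$. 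Young's inequality bounds this by $T\e^{-c_pT}$, with the factor $T$ arising at $p=2$.

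Restricted weak type then follows by combining the unit-interval pieces exactly as in \eqref{eq:SS3.1resweak}, and the strong-type statement for $\alpha=(1-n)/p$ follows as in \eqref{eq:SS3.1strong}. The main obstacle is the edge singularities $t\approx|R-r|$ and $t\approx R+r$. There the local Euclidean maximal estimates must be invoked with the correct exponential weights uniformly in $T$, and the edge must be cleanly separated from the bulk exponential bookkeeping.
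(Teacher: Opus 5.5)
There is a genuine gap near the origin, that is, when $\rho:=\min\{R,r\}\le\frac1{10}$ and $\max\{R,r\}\approx t\approx T$. The paper isolates these configurations as $\bJ^\alpha_2$ ($r\le\frac1{10}$) and $\bJ^\alpha_3$ ($R\le\frac1{10}$).

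Your outer-piece claims hold only when $\rho\gtrsim1$: namely, that near $t\approx R+r$ the only singular factor is $(t-R-r)^{\alpha+\frac{n-3}2}$ and that the effective kernel has size $\e^{-(n-1)T}$. Since $t-|R-r|=(t-R-r)+2\rho$, the factor $(\cosh t-\cosh(R-r))^{\alpha-1}\sim\e^{(\alpha-1)T}(t-|R-r|)^{\alpha-1}$ in \eqref{eq:kerest(ii)} reaches size $\rho^{\alpha-1}\e^{(\alpha-1)T}$ near $t=R+r$. This is unbounded as $\rho\to0$, because $\alpha<1$. In the inner region, $(\sinh R\,\sinh r)^{\frac{1-n}2}\sim(\rho\,\e^T)^{\frac{1-n}2}$ blows up, and the edge factor in \eqref{eq:kerest(i)} lives at scale $\rho$.

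Together with the volume factor $\rho^{n-1}\d\rho$, this is the $n$-dimensional Euclidean radial structure, translated in the far variable, not a one-dimensional one. The paper's proof shows $|K^\alpha_t(R,r)|\lesssim\e^{-(n-1)T}E^\alpha_{t-T+2}(R-T+2,r)$ for $r\le\frac1{10}$, $|t-R|<\frac12$. It also shows $|K^\alpha_t(R,r)|(\sinh r)^{n-1}\lesssim E^\alpha_{t-T+2}(R,r-T+2)$ for $R\le\frac1{10}$, $|t-r|<\frac12$. It then applies Lemma~\ref{lem:SS3.1Eumaxest} to $\fe^\alpha$, which gives the factors $\e^{-\frac{n-1}{p'}T}$ and $\e^{-\frac{n-1}pT}$.

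This is the only place where the thresholds $1-n+\frac np$ and $\frac{1-n}p$, and the restriction $\alpha<\frac1p$, are used. A one-dimensional operator with kernel $\e^{-(n-1)T}(t-R-r)_+^{\alpha+\frac{n-3}2}$ needs only $\alpha\ge\frac1p-\frac{n-1}2$. So the mechanism you describe does not match your own remark that this is where \eqref{eq:SS3.1threeweak} and $\alpha<\frac1p$ enter.

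The strong-type statement at $\alpha=\frac{1-n}p$, $p>\frac{2n}{n-1}$, is out of reach without the maximal theorem for $\fe^\alpha$. For $R\le\frac1{10}$, a pointwise H\"older bound gives only $\lesssim R^{\alpha-\frac1p}\e^{-\frac{n-1}pT}\|f\|_{L^p_{\rad}(\bH^n)}=R^{-\frac np}\e^{-\frac{n-1}pT}\|f\|_{L^p_{\rad}(\bH^n)}$. This lies in $L^{p,\infty}(R^{n-1}\d R)$ but not in $L^p$ near $R=0$.

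Away from the origin your exponential bookkeeping is right. The prefactor should be $\e^{-(n-2+\alpha)t}$, not $\e^{-(n-2+2\alpha)t}$, but your subsequent $\e^{-\frac{n-1}2t}$ is correct. Conjugating by $\e^{(n-1)R/p}$ and applying Young's inequality to the bulk is a valid alternative to the paper's route. The paper instead proves the pointwise Lorentz--H\"older bound $\sup_{T\le t\le T+1}\|K^\alpha_t(R,\cdot)\|_{L^{p',\infty}([\frac1{10},\infty),(\sinh r)^{n-1}\d r)}\lesssim T^{\frac1{p'}}\e^{-c_pT}(\sinh R)^{-\frac{n-1}p}$ and then sums over windows of length $T$.

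For $R,r\ge\frac1{10}$, however, the edges need no maximal theorem. They are also not of the form treated in Proposition~\ref{prop:SS3.1local}: the singular sets $r=R\pm t$ and $r=t-R$ move with $t$ and are not spheres of radius at most $1$ about $R$. Since $t$ ranges over a unit interval, H\"older's inequality handles them, uniformly in $t$, against $|t-|R-r||^{\alpha+\frac{n-3}2}$ and $|t-R-r|^{\alpha+\frac{n-3}2}$ in $L^{p'}$ (in $L^{p',\infty}$ when $\alpha=\frac1p-\frac{n-1}2$).
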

    \begin{proof}
        We first write
        \begin{align}\label{eq:SS3.2decom}
            \fm^\alpha_T(f)\leq \underbrace{\1_{(\frac1{10},\infty)}\fm^\alpha_T\big(f\1_{(\frac1{10},\infty)}\big)}_{\bJ^\alpha_1(f)}+\underbrace{\1_{(\frac1{10},\infty)}\fm^\alpha_T\big(f\1_{(0,\frac1{10}]}\big)}_{\bJ^\alpha_2(f)}+\underbrace{\1_{(0,\frac1{10}]}\fm^\alpha_T(f)}_{\bJ^\alpha_3(f)}.
        \end{align}
        \smallskip

        \begin{asparaitem}
            \item \underline{Estimates for $\bJ^\alpha_1(f)$.}
            We shall show that for $\alpha\geq 1/p-(n-1)/2$
            \begin{align}\label{eq:SS3.2case1est}
                \|\bJ^\alpha_1(f)\|_{L^p_{\rad}(\bH^n)}\lesssim T\,\e^{-c_pT}\|f\|_{L^{p,1}_{\rad}(\bH^n)},
            \end{align}
            and that for $\alpha>1/p-(n-1)/2$ the above estimate holds with $L^p_{\rad}$ in place of $L^{p,1}_{\rad}$.

            We first claim that for $R\geq 1/10$ and $\alpha\geq 1/p-(n-1)/2$,
            \begin{align}\label{eq:SS3.2case1claim}
                \sup_{T\leq t\leq T+1}\|K^\alpha_t(R,\cdot)\|_{L^{p',\infty}([\frac1{10},\infty),(\sinh r)^{n-1}\d r)}\lesssim T^{\frac1{p'}}\e^{-c_pT}(\sinh R)^{-\frac{n-1}p}
            \end{align}
            and that the same estimates holds with $L^{p'}$ in place of $L^{p',\infty}$ for $\alpha>1/p-(n-1)/2$. Once these estimates are prove, it follows from Lemma~\ref{lem:S2kerexp}\eqref{lem:S2kerexp(i)} and the Lorentz H\"older inequality (cf. \cite[the proof of (v) on p.~58]{Gra14GTM249}) that for $\alpha\geq 1/p-(n-1)/2$
            \begin{align*}
                {}&\bJ^\alpha_1(f)(R)\\
                \leq{}&\sup_{T\leq t\leq T+1}\|k^\alpha_t(R,\cdot)\|_{L^{p',\infty}([\frac1{10},\infty),(\sinh r)^{n-1}\d r)}\|f\|_{L^{p,1}([\max\{R-T-1,\frac1{10}\},R+T+1],(\sinh r)^{n-1}\d r)}\\
                \lesssim{}&T^{\frac1{p'}}\e^{-c_pT}(\sinh R)^{-\frac{n-1}p}\|f\|_{L^{p,1}([\max\{R-T-1,\frac1{10}\},R+T+1],(\sinh r)^{n-1}\d r)},
            \end{align*}
            and for $\alpha>1/p-(n-1)/2$
            \begin{align*}
                \bJ^\alpha_1(f)(R)\lesssim T^{\frac1{p'}}\e^{-c_pT}(\sinh R)^{-\frac{n-1}p}\|f\|_{L^p([\max\{R-T-1,\frac1{10}\},R+T+1],(\sinh r)^{n-1}\d r)}.
            \end{align*}
            Note that these estimates imply, for $j=0,1,\dots$,
            \begin{align*}
                {}&\|\bJ^\alpha_1(f)\|_{L^p((jT,(j+1)T],(\sinh R)^{n-1}\d R)}\\
                \lesssim{}&T\,\e^{-c_pT}\times
                \begin{cases}
                    \big\|f\big\|_{L^{p,1}([\max\{(j-1)T-1,\frac1{10}\},(j+2)T+1],(\sinh r)^{n-1}\d r)}, & \alpha\geq \frac1p-\frac{n-1}2,\\
                    \big\|f\big\|_{L^p([\max\{(j-1)T-1,\frac1{10}\},(j+2)T+1],(\sinh r)^{n-1}\d r)}, & \alpha>\frac1p-\frac{n-1}2.
                \end{cases}
            \end{align*}
            Arguing analogously to \eqref{eq:SS3.1strong} and \eqref{eq:SS3.1resweak} to sum over $j$, we obtain the desired estimate \eqref{eq:SS3.2case1est}.

            To verify the claimed estimates \eqref{eq:SS3.2case1claim} and its strong version, we apply Lemma~\ref{lem:S2kerest} and the identity
            \begin{align*}
                \cosh a-\cosh b=\frac{\e^a}2(1-\e^{-a+b})(1-\e^{-a-b})
            \end{align*}
            to obtain for $R,r\geq 1/10$ and $T\leq t\leq T+1$
            \begin{align*}
                |K^\alpha_t(R,r)|\lesssim{}&\underbrace{\1_{\{|R-r|<t\}}\e^{-\frac{n-1}2t}(\sinh R\,\sinh r)^{\frac{1-n}2}\1_{\{|t-|R-r||,|t-R-r|>\frac14\}}}_{K^\alpha_{t,1}(R,r)}\\
                {}&\ \ +\underbrace{\1_{\{|R-r|<t\}}\e^{-\frac{n-1}2t}(\sinh R\,\sinh r)^{\frac{1-n}2}\phi^\alpha(|t-|R-r||)\,\1_{\{|t-|R-r||\leq\frac14\}}}_{K^\alpha_{t,2}(R,r)}\\
                {}&\ \ +\underbrace{\1_{\{|R-r|<t\}}\e^{-\frac{n-1}2t}(\sinh R\,\sinh r)^{\frac{1-n}2}\psi^\alpha(|t-R-r|)\,\1_{\{|t-R-r|\leq\frac14\}}}_{K^\alpha_{t,3}(R,r)},
            \end{align*}
            where
            \begin{align*}
                \phi^\alpha(v)=
                \begin{cases}
                    v^{\alpha+\frac{n-3}2}, & \frac{1-n}2<\alpha<\frac{3-n}2,\\
                    1, & \alpha\geq \frac{3-n}2,
                \end{cases}
                \qquad\psi^\alpha(v)=
                \begin{cases}
                    v^{\alpha+\frac{n-3}2}, & \frac{1-n}2<\alpha<\frac{3-n}2,\\
                    \log \frac1v, & \alpha=\frac{3-n}2,\\
                    1, & \alpha>\frac{3-n}2.
                \end{cases}
            \end{align*}
            For $\alpha>(1-n)/2$,
            \begin{align*}
                \sup_{T\leq t\leq T+1}\|K^\alpha_{t,1}(R,\cdot)\|_{L^{p'}([\frac1{10},\infty),(\sinh r)^{n-1}\d r)}\lesssim T^{\frac1{p'}}\e^{-c_pT}(\sinh R)^{-\frac{n-1}p}.
            \end{align*}
            In addition, for $\alpha\geq 1/p-(n-1)/2$,
            \begin{align*}
                {}&\sup_{T\leq t\leq T+1}\|K^\alpha_{t,2}(R,\cdot)\|_{L^{p',\infty}([\frac1{10},\infty),(\sinh r)^{n-1}\d r)}+
                \sup_{T\leq t\leq T+1}\|K^\alpha_{t,3}(R,\cdot)\|_{L^{p',\infty}([\frac1{10},\infty),(\sinh r)^{n-1}\d r)}\\
                \lesssim{}&\e^{-c_pT}(\sinh R)^{-\frac{n-1}p},
            \end{align*}
            and the above estimate holds with $L^{p'}$ in place of $L^{p',\infty}$ for $\alpha>1/p-(n-1)/2$. Therefore, the desired estimates \eqref{eq:SS3.2case1est} and its strong version are proved.
            \smallskip

            \item \underline{Estimates for $\bJ^\alpha_2(f)$.}
            We shall show that for $\alpha<1/p$ such that \eqref{eq:SS3.1threeweak} holds,
            \begin{align}\label{eq:SS3.2case2est}
                \|\bJ^\alpha_2(f)\|_{L^{p,\infty}_{\rad}(\bH^n)}\lesssim \e^{-\frac{n-1}{p'}T}\|f\|_{L^{p,1}_{\rad}(\bH^n)},
            \end{align}
            and that for $p>2n/(n-1)$ and $\alpha=(1-n)/p$, the above estimate holds with $L^p_{\rad}$ in place of $L^{p,\infty}_{\rad}$ and $L^{p,1}_{\rad}$.

            In view of Lemma~\ref{lem:S2kerexp}\eqref{lem:S2kerexp(i)}, we further decompose
            \begin{align*}
                \bJ^\alpha_2(f)\leq \underbrace{\sup_{T\leq t\leq T+1}\1_{(t-\frac12,t+\frac12)}\big|M^\alpha_t\big(f\1_{(0,\frac1{10}]}\big)\big|}_{\bJ^\alpha_{2,1}(f)}+\underbrace{\sup_{T\leq t\leq T+1}\1_{(\frac1{10},t-\frac12]}\big|M^\alpha_t\big(f\1_{(0,\frac1{10}]}\big)\big|}_{\bJ^\alpha_{2,2}(f)}.
            \end{align*}
            For $|t-R|<1/2$ and $r\leq 1/10$, we have
            \begin{align*}
                \cosh t-\cosh(R-r)\sim (t-R+r)\,\e^T,
            \end{align*}
            \begin{align*}
                |\cosh(R+r)-\cosh t|\sim|R+r-t|\,\e^T,
            \end{align*}
            \begin{align*}
                \sinh R\,\sinh r\sim r\,\e^T,
            \end{align*}
            which implies
            \begin{align*}
                |K^\alpha(R,r)|\lesssim \e^{-(n-1)T}E^\alpha_{t-T+2}(R-T+2,r)
            \end{align*}
            according to Lemma~\ref{lem:S2kerest} and the definition of $E^\alpha_t$. Therefore, in view of Lemma~\ref{lem:SS3.1Eumaxest}, for $\alpha<1/p$ satisfying \eqref{eq:SS3.1threeweak}
            \begin{align*}
                \|\bJ^\alpha_{2,1}(f)\|_{L^{p,\infty}((T-\frac12,T+\frac32),(\sinh R)^{n-1}\d R)}\lesssim{}&\e^{-\frac{n-1}{p'}T}\big\|\fe^\alpha\big(f\1_{(0,\frac1{10}]}\big)\big\|_{L^{p,\infty}((\frac32,\frac72),R^{n-1}\d R)}\\
                \lesssim{}&\e^{-\frac{n-1}{p'}T}\|f\|_{L^{p,1}((0,\frac1{10}],(\sinh r)^{n-1}\d r)},
            \end{align*}
            and that for $p>2n/(n-1)$ and $\alpha=(1-n)/p$, the above estimate holds with $L^p_{\rad}$ in place of $L^{p,\infty}_{\rad}$ and $L^{p,1}_{\rad}$.

            In addition, for $t-R\geq 1/2$ and $r\leq 1/10$, it follows from Lemma~\ref{lem:S2kerest} that
            \begin{align*}
                |K^\alpha_t(R,r)|\lesssim \e^{-(n-1)T},
            \end{align*}
            which implies for all $1\leq p\leq\infty$ and $\alpha>(1-n)/2$
            \begin{align*}
                \|\bJ^\alpha_{2,2}(f)\|_{L^p((\frac1{10},T+\frac12),(\sinh R)^{n-1}\d R)}\lesssim \e^{-\frac{n-1}{p'}T}\|f\|_{L^p((0,\frac1{10}],(\sinh r)^{n-1}\d r)}.
            \end{align*}
            Therefore, the desired estimates \eqref{eq:SS3.2case2est} and its strong version are proved.
            \smallskip

            \item \underline{Estimates for $\bJ^\alpha_3(f)$.}
            We shall show that for $\alpha<1/p$ such that \eqref{eq:SS3.1threeweak} holds,
            \begin{align}\label{eq:SS3.2case3est}
                \|\bJ^\alpha_3(f)\|_{L^{p,\infty}_{\rad}(\bH^n)}\lesssim \e^{-\frac{n-1}pT}\|f\|_{L^{p,1}_{\rad}(\bH^n)},
            \end{align}
            and that for $p>2n/(n-1)$ and $\alpha=(1-n)/p$, the above estimate holds with $L^p_{\rad}$ in place of $L^{p,\infty}_{\rad}$ and $L^{p,1}_{\rad}$.

            In view of Lemma~\ref{lem:S2kerexp}\eqref{lem:S2kerexp(i)}, we further decompose
            \begin{align*}
                \bJ^\alpha_3(f)\leq \underbrace{\sup_{T\leq t\leq T+1}\1_{(0,\frac1{10}]}\big|M^\alpha_t\big(f\1_{(t-\frac12,t+\frac12)}\big)\big|}_{\bJ^\alpha_{3,1}(f)}+\underbrace{\sup_{T\leq t\leq T+1}\1_{(0,\frac1{10}]}\big|M^\alpha_t\big(f\1_{(0,t-\frac12]}\big)\big|}_{\bJ^\alpha_{3,2}(f)}.
            \end{align*}

            For $R\leq 1/10$ and $|t-r|<1/2$, we argue as in the estimate for $\bJ^\alpha_{2,1}(f)$ above to obtain
            \begin{align*}
                |K^\alpha_t(R,r)|\,(\sinh r)^{n-1}\lesssim E^\alpha_{t-T+2}(R,r-T+2).
            \end{align*}
            Therefore, in view of Lemma~\ref{lem:SS3.1Eumaxest}, for $\alpha<1/p$ satisfying \eqref{eq:SS3.1threeweak}
            \begin{align*}
                \|\bJ^\alpha_{3,1}(f)\|_{L^{p,\infty}((0,\frac1{10}],(\sinh R)^{n-1}\d R)}\lesssim{}&\big\|\fe^\alpha\big(f(\cdot+T-2)\1_{(\frac32,\frac72)}\big)\big\|_{L^{p,\infty}((0,\frac1{10}),R^{n-1}\d R)}\\
                \lesssim{}&\|f(\cdot+T-2)\|_{L^{p,1}((\frac32,\frac72),r^{n-1}\d r)}\\
                \sim{}&\e^{-\frac{n-1}pT}\|f\|_{L^{p,1}_{\rad}(\bH^n)},
            \end{align*}
            and that for $p>2n/(n-1)$ and $\alpha=(1-n)/p$, the above estimate holds with $L^p_{\rad}$ in place of $L^{p,\infty}_{\rad}$ and $L^{p,1}_{\rad}$.

            In addition, for $R\leq 1/10$ and $t-r>1/2$, it follows from Lemma~\ref{lem:S2kerest} that
            \begin{align*}
                |K^\alpha_t(R,r)|\lesssim \e^{-(n-1)T},
            \end{align*}
            which implies for all $1\leq p\leq\infty$ and $\alpha>(1-n)/2$
            \begin{align*}
                \|\bJ^\alpha_{3,2}(f)\|_{L^p((0,\frac1{10}),(\sinh R)^{n-1}\d R)}\lesssim \e^{-\frac{n-1}pT}\|f\|_{L^p_{\rad}(\bH^n)}.
            \end{align*}
            Therefore, the desired estimates \eqref{eq:SS3.2case3est} and its strong version are proved.
        \end{asparaitem}
        \smallskip

        Finally, we combine \eqref{eq:SS3.2decom} with estimates \eqref{eq:SS3.2case1est}--\eqref{eq:SS3.2case3est} and there strong versions to conclude the proof of Proposition~\ref{prop:SS3.2global}.
    \end{proof}

    \subsection{Conclusion of the proof of Theorem~\ref{thm:main}}

    \begin{proof}[Proof of Theorem~\ref{thm:main} (sufficiency part)]
        Note that
        \begin{align*}
            \fm^\alpha(f)\leq\fm^\alpha_0(f)+\sum_{T=1}^\infty\fm^\alpha_T(f).
        \end{align*}
        Since for $1<p<\infty$ and sufficiently small $\varepsilon>0$, $\sum_{T=1}^\infty(T+1)\,\e^{(\varepsilon-c_p)T}<\infty$, Propositions~\ref{prop:SS3.1local} and~\ref{prop:SS3.2global} imply the $L^{p,1}_{\rad}\to L^{p,\infty}_{\rad}$-boundedness of $\fm^\alpha$ whenever $\alpha<1/p$ and \eqref{eq:SS3.1threeweak} holds, as well as the $L^p_{\rad}$-boundedness of $\fm^\alpha$ when $p>2n/(n-1)$ and $\alpha=(1-n)/p$. Theorem~\ref{thm:main}\eqref{thm:main(ii)} is therefore proved.

        To show the sufficiency part of Thoerem~\ref{thm:main}\eqref{thm:main(i)}, we fix $1<p<\infty$,
        \begin{align}\label{eq:SS3.3strict}
            \alpha>
            \begin{cases}
                1-n+\frac np, & 1<p\leq 2,\\
                \frac1p-\frac{n-1}2, & 2<p\leq \frac{2n}{n-1},\\
                \frac{1-n}p, & p>\frac{2n}{n-1},
            \end{cases}
        \end{align}
        and choose $1<p_0<p<p_1<\infty$ sufficiently close to $p$ such that $\alpha$ satisfies \eqref{eq:SS3.3strict} as well as $\alpha<1/p$ at $p_0$ and $p_1$. Interpolation between the $L^{q,1}_{\rad}\to L^{q,\infty}_{\rad}$-boundedness of $\fm^\alpha$ at $q=p_0$ and $q=p_1$ (cf. \cite[Theorem 1.4.19]{Gra14GTM249}) yields the $L^p_{\rad}$-boundedness of $\fm^\alpha$. Thus all the desired $L^p_{\rad}$-boundedness of $\fm^\alpha$ for $1<p<\infty$ under the additional restriction $\alpha<1/p$ is proved. At $p=\infty$, it follows directly from \eqref{eq:S10order} that $\fm^0$ is bounded on $L^\infty(\bH^n)$. Finally, we apply Lemma~\ref{lem:S2raising} to remove the additional restriction $\alpha<1/p$ for $1<p<\infty$ and $\alpha=0$ at $p=\infty$. This proves the sufficiency part of Theorem~\ref{thm:main}\eqref{thm:main(i)}, and thus the whole sufficiency part of Theorem~\ref{thm:main} is concluded.
    \end{proof}

    \begin{proof}[Proof of Theorem~\ref{thm:main} (necessity part)]
        We shall construct three counterexapmles to show that the necessity conditions for the $L^p_{\rad}$-boundedness of $\fm^\alpha$ are $\alpha>1-n+n/p$, $\alpha>1/p-(n-1)/2$ and $\alpha\geq(1-n)/p$, respectively.
        \smallskip

        \begin{asparaitem}
            \item \underline{The condition $\alpha>1-n+n/p$.}
            Let $0<\delta<1/100$ be a fixed sufficiently small number to be determined later. For $0<\varepsilon<1/100$ we set
            \begin{align*}
                f_\varepsilon:=\1_{[\varepsilon(1+\delta),\varepsilon(1+2\delta)]}.
            \end{align*}
            For $5\varepsilon\leq R\leq 1/10$, we choose $t=R-\varepsilon$. If $r\in\supp f_\varepsilon$, then $|R-r|<t<R+r$,
            \begin{align*}
                \sinh R\,\sinh r\sim R\varepsilon,\qquad\cosh t-\cosh(R-r)&\sim R\delta\varepsilon.
            \end{align*}
            Since $F(a,b;c;0)=1$, by taking $\delta=\delta_{\alpha,n}$ sufficiently small such that the hypergeometric function term in Lemma~\ref{lem:S2kerexp}\eqref{lem:S2kerexp(ii)} is uniformly positive and in view of Lemma~\ref{lem:S2kerexp}\eqref{lem:S2kerexp(ii)}, we have
            \begin{align*}
                K^\alpha_t(R,r)\gtrsim R^{1-n-\alpha}\varepsilon^{\alpha-1},
            \end{align*}
            whence for $R$ as above
            \begin{align*}
                M_t^\alpha(f_\varepsilon)(R)\gtrsim R^{1-n-\alpha}\varepsilon^{\alpha-1}\int_{\varepsilon(1+\delta)}^{\varepsilon(1+2\delta)}r^{n-1}\d r\sim \left(\frac\varepsilon R\right)^{\alpha+n-1}.
            \end{align*}
            This, together with the fact that $\|f_\varepsilon\|_{L^p_{\rad}(\bH^n)}^p\sim\varepsilon^n$, implies
            \begin{align*}
                \|\fm^\alpha f_\varepsilon\|_{L^p_{\rad}(\bH^n)}^p\gtrsim{}&\|f_\varepsilon\|_{L^p_{\rad}(\bH^n)}^p\varepsilon^{p(\alpha+n-1)-n}\int_{5\varepsilon}^{\frac1{10}}R^{n-1-p(n-1+\alpha)}\d R\\
                \sim{}&\|f_\varepsilon\|_{L^p_{\rad}(\bH^n)}^p\times
                \begin{cases}
                    1, & \alpha>1-n+\frac np,\\
                    \log\frac1\varepsilon, & \alpha=1-n+\frac np,\\
                    \varepsilon^{p(n-1-\frac np+\alpha)}, & \alpha<1-n+\frac np
                \end{cases}.
            \end{align*}
            Thus $\alpha>1-n+n/p$ is necessary for the $L^p_{\rad}$-boundedness of $\fm^\alpha$.
            \smallskip

            \item \underline{The condition $\alpha>1/p-(n-1)/2$.}
            Let $\delta>0$ be a fixed sufficiently small number to be determined later. For $0<\varepsilon<\delta/2$ we set
            \begin{align*}
                f_\varepsilon(r):=(r-1)^{-\frac1p}\left(\log\frac1{r-1}\right)^{-1}\1_{(1+\varepsilon,1+\delta)}(r).
            \end{align*}
            If $r\in\supp f_\varepsilon$, $5/4\leq R\leq 3/2$ and $t=R-1$, then $|R-r|<t<R+r$,
            \begin{align*}
                \sinh R\sinh r\sim 1,\qquad\cosh t-\cosh(R-r)\sim r-1\leq\delta.
            \end{align*}
            Since $F(a,b;c;0)=1$, by taking $\delta=\delta_{\alpha,n}$ sufficiently small such that the hypergeometric function term in Lemma~\ref{lem:S2kerexp}\eqref{lem:S2kerexp(ii)} is uniformly positive and in view of Lemma~\ref{lem:S2kerexp}\eqref{lem:S2kerexp(ii)}, we have
            \begin{align*}
                K^\alpha_t(R,r)\gtrsim (r-1)^{\alpha+\frac{n-3}2},
            \end{align*}
            whence for $R$ as above and $\alpha\leq 1/p-(n-1)/2$
            \begin{align*}
                |M^\alpha_t(f_\varepsilon)(R)|\gtrsim \int_\varepsilon^\delta\frac{s^{\alpha-\frac1p+\frac{n-1}2-1}}{\log\frac1s}\d s\to\infty\qquad\text{as }\varepsilon\to 0^+.
            \end{align*}
            On the other hand, $\|f_\varepsilon\|_{L^p_{\rad}(\bH^n)}\lesssim 1$ uniformly in $\varepsilon$. Thus $\alpha>1/p-(n-1)/2$ is necessary for the $L^p_{\rad}$-boundedness of $\fm^\alpha$.
            \smallskip

            \item \underline{The condition $\alpha\geq (1-n)/p$.}
            Let $0<\delta<1/100$ be a fixed sufficiently small number to be determined later. For $0<\varepsilon<\delta/2$ we set
            \begin{align*}
                f_\varepsilon:=\1_{[1+\frac{\delta\varepsilon}2,1+\delta\varepsilon]}.
            \end{align*}
            For $\varepsilon\leq R\leq 2\varepsilon$, we choose $t=1-R+2\delta\varepsilon$. If $r\in\operatorname{supp} f_\varepsilon$, then $|R-r|<t<R+r$,
            \begin{align*}
                \sinh R\,\sinh r\sim \varepsilon,\qquad\cosh t-\cosh(R-r)&\sim \delta\varepsilon.
            \end{align*}
            Since $F(a,b;c;0)=1$, by taking $\delta=\delta_{\alpha,n}$ sufficiently small such that the hypergeometric function term in Lemma~\ref{lem:S2kerexp}\eqref{lem:S2kerexp(ii)} is uniformly positive and in view of Lemma~\ref{lem:S2kerexp}\eqref{lem:S2kerexp(ii)}, we have
            \begin{align*}
                K^\alpha_t(R,r)\gtrsim \varepsilon^{\alpha-1},
            \end{align*}
            whence for $R$ as above
            \begin{align*}
                M_t^\alpha(f_\varepsilon)(R)\gtrsim \varepsilon^\alpha.
            \end{align*}
            This, together with the fact that $\|f_\varepsilon\|_{L^p_{\rad}(\bH^n)}\sim\varepsilon^{1/p}$, implies
            \begin{align*}
                \|\fm^\alpha f_\varepsilon\|_{L^p_{\rad}(\bH^n)}\gtrsim\|f_\varepsilon\|_{L^p_{\rad}(\bH^n)}\varepsilon^{\alpha-1/p}\left(\int_\varepsilon^{2\varepsilon}R^{n-1}\d R\right)^{\frac1p}\sim \|f_\varepsilon\|_{L^p_{\rad}(\bH^n)}\varepsilon^{\alpha+\frac{n-1}p}.
            \end{align*}
            When $p=\infty$, the above estimates are understood by the convention $1/\infty=0$. Thus $\alpha\geq (1-n)/p$ is necessary for the $L^p_{\rad}$-boundedness of $\fm^\alpha$.\qedhere
        \end{asparaitem}
    \end{proof}

    \subsection*{Declaration of AI usage}
    During preparation of the manuscript, the author used GPT-6 Astra for assistance with concrete computations and language editing. The author takes responsibility for the mathematical arguments and the final content of the paper.

    \subsection*{Acknowledgements}
    The author would like to thank Naijia Liu and Professor Lixin Yan for helpful discussions.

    \providecommand{\bysame}{\leavevmode\hbox to3em{\hrulefill}\thinspace}
    \providecommand{\href}[2]{#2}

\end{document}